\newtheorem{theorem}{Theorem}[section]
\newtheorem{remark}[theorem]{Remark}
\newtheorem{lemma}[theorem]{Lemma}
\newtheorem{proposition}[theorem]{Proposition}
\newtheorem{corollary}[theorem]{Corollary}
\newtheorem{definition}[theorem]{Definition}
\theoremstyle{plain}
\newcommand{\BB}{\mathcal{B}}
\newcommand{\FF}{\mathcal{F}}
\newcommand{\PP}{\mathcal{P}}
\newcommand{\field}[1]{\mathbb{#1}}
\newcommand{\R}{\field{R}}
\newcommand{\N}{\field{N}}
\renewcommand{\P}{\field{P}}
\newcommand{\de}{\delta}
\newcommand{\la}{\lambda}
\newcommand{\si}{\sigma}
\newcommand{\ps}{\psi}
\newcommand{\Om}{\Omega}
\title[A note on stochastic dominance and compactness]{A note on stochastic dominance and compactness}
\author[M.~Nendel]{Max Nendel}
\keywords{}
\address{Max Nendel, Center for Mathematical Economics, Bielefeld University, 33615 Bielefeld, Germany}
\email{\href{mailto:Max.Nendel@uni-bielefeld.de}{Max.Nendel@uni-bielefeld.de}}
\date{\today}
\numberwithin{equation}{section}
\begin{document}

\begin{abstract}
 In this work, we discuss completeness for the lattice orders of first and second order stochastic dominance. The main results state that, both, first and second order stochastic dominance induce Dedekind super complete lattices, i.e.~lattices in which every bounded nonempty subset has a countable subset with identical least upper bound and greatest lower bound. Moreover, we show that, if a suitably bounded set of probability measures is directed (e.g.~a lattice), then the supremum and infimum w.r.t.~first or second order stochastic dominance can be approximated by sequences in the weak topology or in the Wasserstein-$1$ topology, respectively. As a consequence, we are able to prove that a sublattice of probability measures is complete w.r.t.~first order stochastic dominance or second order stochastic dominance and increasing convex order if and only if it is compact in the weak topology or in the Wasserstein-$1$ topology, respectively. This complements a set of characterizations of tightness and uniform integrability, which are discussed in a preliminary section.

 \smallskip
 \noindent \emph{Key words:} Stochastic dominance, complete lattice, tightness, uniform integrability, Wasserstein distance
 
 \smallskip
 \noindent \emph{AMS 2010 Subject Classification:} 60E15; 60B10; 06B23
\end{abstract}

\maketitle

\section{Introduction}

In this work, we discuss completeness for lattice orders arising from first and second order stochastic dominance, and their relation to tightness and uniform integrability, respectively. Given a lattice $L$, it is a well-known result, due to Birkhoff \cite[Section X.12, Theorem X.20]{MR0227053} and Frink \cite{MR6496}, that $L$ is \textit{complete}, i.e.~every nonempty subset of $L$ has a least upper bound and a greatest lower bound, if and only if $L$ is compact in the \textit{interval topology}. The latter is the smallest topology on $L$ such that all closed intervals of the form
$$(-\infty, a]:=\{x\in L \, |\, x\leq a\}\quad \text{and}\quad [a,\infty):=\{x\in L\, |\, x\geq a\}, \quad \text{for }a\in L,$$
are closed. Due to its definition, the interval topology or, more precisely, open sets in the interval topology are ususally not easy to describe. Moreover, in general, the interval topology is not even a Hausdorff topology. We refer to Baer \cite{MR0071400} for a characterization of lattices with Hausdorff interval topologies. Stochastic dominance or convex orders are present in many applications in microeconomics and decision theory (cf.~Levy \cite{MR3525602}), as well as mathematics, e.g.~martingale optimal transport (cf.~Strassen \cite{MR177430}) and, recently, submodular mean field games (cf.~Dianetti et al.~\cite{dffn19}). It is therefore desirable to obtain a more tractable characterization of complete (w.r.t.~stochastic dominance) lattices of probability measures than the one in terms of compactness in the interval topology. Two of the main results of this paper are a characterization of complete lattices w.r.t.~first and second order stochastic dominance in terms of compactness in the weak topology and in the Wasserstein-$1$ topology, respectively (Theorem \ref{complattice1order} and Theorem \ref{complattice2order}).\\

Dianetti et al.~\cite{dffn19} propose a lattice-theoretical approach to mean field games using Tarski's fixed point theorem. The crucial step in their analysis is to show that an appropriately chosen lattice $L$ of flows of probability measures is complete. Moreover, the approximation of suprema and infima is of fundamental importance in their analysis. Motivated by this application, in Section \ref{sec.completeness}, for a fixed $\sigma$-finite measure space $(S,\mathcal S,\pi)$, we consider the sets $L^0\big(S,\mathcal S,\pi;\PP(\R)\big)$ and $L^0\big(S,\mathcal S,\pi;\PP_1(\R)\big)$ of all equivalence classes of $\mathcal S$-$\BB\big(\PP(\R)\big)$ and $\mathcal S$-$\BB\big(\PP_1(\R)\big)$ measureable flows $(\mu_t)_{t\in S}$ of probability measures, respectively, and address the Dedekind completeness as well as the approximation of suprema and infima for the latter. Here, $\PP(\R)$ is the set of all probability measures on the Borel $\sigma$-algebra of $\R$ and $\PP_1(\R)$ is the subset of all probability measures with finite first moment, endowed with the lattice orders of first and second order stochastic dominance as well as the Borel $\sigma$-algebras $\BB\big(\PP(\R)\big)$ and $\BB\big(\PP_1(\R)\big)$ w.r.t.~the weak topology on $\PP(\R)$ and the Wasserstein-$1$ topology on $\PP_1(\R)$, respectively. Kertz and R\"osler \cite{MR1833858} proved that the lattices $\PP(\R)$ and $\PP_1(\R)$ are \textit{Dedekind complete}, i.e.~every bounded nonempty subset has a least upper bound and a greatest lower bound. In the present paper, we show that $L^0\big(S,\mathcal S,\pi;\PP(\R)\big)$ and appropriate sublattices of $L^0\big(S,\mathcal S,\pi;\PP_1(\R)\big)$ are \textit{Dedekind super complete}, i.e. every bounded nonempty subset has a countable subset with identical least upper bound and a greatest lower bound (Theorem \ref{thm.completeness} and Theorem \ref{thm.completeness2}). We further prove that, for directed and (suitably) bounded sets of probability measures, the supremum and infimum w.r.t.~first or second order stochastic dominance can be approximated by monotone sequences in the weak topology or the Wasserstein-$1$ topology, respectively. The proof relies on an abstract lemma (Lemma \ref{lem.auxres}) in the Appendix \ref{append.auxres}, which gives a sufficient and neccessary condition for the Dedekind super completeness of lattices. The proof of the latter is an abstract version the existence proof of the essential supremum for families of random variables (see e.g.~\cite[Theorem A.32]{MR2169807}). Choosing $S$ to be a singleton with $\pi(S)>0$, we obtain the Dedekind super completeness of $\PP(\R)$ and $\PP_1(\R)$, endowed with first order and second order stochastic dominance, as well as the approximation of suprema/infima in the weak topology and the Wasserstein-$1$ topology, respectively.\\

In Section \ref{sec.tightness}, we discuss some preliminary results on tightness and uniform integrability. Chandra \cite{MR3412766} gives a De La Vall\'ee Poussin-type characterization for tightness and uniform integrability in terms of a function $\psi\colon [0,\infty)\to [0,\infty)$ with a certain behaviour at infinity, see also Hu and Rosalsky~\cite{MR2740082}. Inspired by these two works, we relate properties of the function $\psi$, such as strict monotonicity and (strict) convexity, to integrability conditions on the set of distributions. In particular, we answer two open questions from \cite{MR3412766}, and derive a perturbation result for families of uniformly integrable random variables in terms of a strictly convex transformation (Lemma \ref{corui1} and Corollary \ref{corui2}). More precisely, we show that, for a family $H$ of uniformly integrable random variables on a probability space $(\Om,\FF,\P)$, there exists a strictly convex nondecreasing function $\psi\colon [0,\infty)\to [0,\infty)$ with $\tfrac{\psi(s)}{s}\to \infty$ as $s\to \infty$, such that $\{\psi(|X|)\, |\, X\in H\}$ is again u.i. Leskel\"a and Vhiola \cite{MR2998767} obtained several characterizations of tightness and uniform integrability in terms of first and second order stochastic dominance for measures on the positive half line. We extend the results by Leskel\"a and Vhiola \cite{MR2998767} to $\R$ and combine them with the results from Section \ref{sec.tightness} and Section \ref{sec.completeness} in order to obtain a characterization of tightness and uniform integrability in terms of integrability conditions for a function $\psi$ with certain properties and boundedness conditions w.r.t.~first and second order stochastic dominance, respectively (Lemma \ref{equivtight} and Lemma \ref{equivui}).\\
 
\noindent {\bf Structure of the paper:} The paper is organized as follows. In Section \ref{sec.tightness}, we derive some preliminary results on tightness and uniform integrability, which we could not find in the literature in this form. Section \ref{sec.completeness} is dedicated to the completeness of lattices of (flows of) probability measures ordered by first and second order stochastic dominance. The main results are Theorem \ref{thm.completeness}, Theorem \ref{thm.completeness2}, Theorem \ref{complattice1order} and Theorem \ref{complattice2order}. In the Appendix \ref{append.auxres}, we first give some lattice-theoretical definitions (Definition \ref{def.complete} and Definition \ref{def:increasing}), and then derive an abstract auxiliary result (Lemma \ref{lem.auxres}) that helps to determine, when a lattice is Dedekind super complete, and forms the basis for the proofs in Section \ref{sec.completeness}. The results from Section \ref{sec.tightness} are proved in the Appendix \ref{appendB}.

\section{Some preliminary remarks on tightness and uniform integrability}\label{sec.tightness}

Let $\R_+:=[0,\infty)$ and $\mathcal B(\R_+)$ denote the Borel $\sigma$-algebra on $\R_+$. We first concentrate on the set $\mathcal M(\R_+)$ of all (possibly nonfinite) measures $\mu\colon\mathcal \mathcal B(\R_+)\to [0,\infty]$. We start with the following characterization of tightness, which is sort of a folk theorem, see e.g. \cite[Lemma 2.1]{MR3412766}, \cite[Remark 7]{MR2998767} or \cite[Lemma D.5.3]{MR2509253}. However, in this form, we could not find it in the literature, which is why we would like to provide a proof in the Appendix \ref{appendB}.

\begin{lemma}\label{firstcrittight}
  Let $K\subset \mathcal M(\R_+)$. Then, the following statements are equivalent:
 \begin{enumerate}
  \item[(i)] $K$ is tight, i.e. $\sup_{\nu\in K}\nu\big((s,\infty)\big)\to 0$ as $s\to \infty$.
  \item[(ii)] There exists a nondecreasing function $\psi\colon [0,\infty)\to [0,\infty)$ with $\psi(0)=0$, $\psi(s)\to \infty$ as $s\to \infty$ and
 \[
  \sup_{\nu\in K}\int_0^\infty \psi(s)\, {\rm d}\nu(s)<\infty.
 \]
 \end{enumerate}
 The function $\psi$ in (ii) can be chosen to be continuous. The function $\psi$ can be chosen to be convex if and only if
 \begin{equation}\label{convexcrit}
  \sup_{\nu\in K}\int_0^\infty (s-M)^+\, {\rm d}\nu(s) <\infty\quad \text{or, equivalently, if}\quad \sup_{\nu\in K}\int_M^\infty s\, {\rm d}\nu(s) <\infty
 \end{equation}
 for some $M\geq 0$. Additionally, the function $\psi$ can be chosen to be strictly increasing if and only if
 \begin{equation}\label{convexcrit1}
  \sup_{\nu\in K}\nu\big((s,\infty)\big)<\infty \quad \text{for all }s>0.
 \end{equation}
\end{lemma}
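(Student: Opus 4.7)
The plan is to prove (ii)$\Rightarrow$(i) by a direct Markov estimate, and (i)$\Rightarrow$(ii) by a De La Vall\'ee-Poussin style construction. For the easy direction, if $\psi$ is nondecreasing and $C:=\sup_{\nu\in K}\int\psi\,d\nu<\infty$, then for $s$ with $\psi(s)>0$ one has $\nu((s,\infty))\leq \psi(s)^{-1}\int_{(s,\infty)}\psi\,d\nu\leq C/\psi(s)$ uniformly in $\nu$, and $\psi(s)\to\infty$ drives the right-hand side to $0$. For the converse, let $g(s):=\sup_{\nu\in K}\nu((s,\infty))$; by (i), $g$ is nonincreasing with $g(s)\to 0$, so I can pick $s_n\uparrow\infty$ with $s_1>0$ and $g(s_n)\leq 2^{-n}$. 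Then
\[
\psi_0(s):=\sum_{n=1}^\infty \mathbf{1}_{(s_n,\infty)}(s)
\]
is nondecreasing, $\psi_0(0)=0$, $\psi_0(s)\to\infty$, and Tonelli gives $\sup_{\nu\in K}\int\psi_0\,d\nu\leq \sum_n g(s_n)\leq 1$. To make $\psi$ continuous I will replace $\psi_0$ by the piecewise linear interpolant $\psi_c$ at the knots $s_n$ with $\psi_c(s_n)=n-1$ and $\psi_c\equiv 0$ on $[0,s_1]$; a direct comparison at the knots shows $\psi_c\leq\psi_0$, so the integral bound survives.

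For the convex equivalence, sufficiency of (\ref{convexcrit}) is immediate: $\psi(s):=(s-M)^+$ is itself convex, nondecreasing, vanishes at $0$, tends to $\infty$, and the bound in (\ref{convexcrit}) is literally $\sup_\nu\int\psi\,d\nu<\infty$. For necessity, any convex nondecreasing $\psi$ with $\psi(0)=0$ and $\psi\to\infty$ satisfies $\psi(s)/s$ nondecreasing on $(0,\infty)$; picking $s_0$ with $\psi(s_0)>0$ yields $\psi(s)\geq (\psi(s_0)/s_0)\,s$ for $s\geq s_0$, and the integral bound transports to (\ref{convexcrit}) with $M=s_0$. The equivalence of the two forms of (\ref{convexcrit}) comes from the decomposition $\int_M^\infty s\,d\nu=\int_M^\infty(s-M)\,d\nu+M\,\nu((M,\infty))$, combined with the observation that $\sup_\nu\int(s-M)^+d\nu<\infty$ already forces $\sup_\nu\nu((M,\infty))<\infty$ by the Markov step.

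For the strictly increasing equivalence, necessity is again Markov: strict increase with $\psi(0)=0$ gives $\psi(s)>0$ for every $s>0$, whence $\sup_\nu\nu((s,\infty))\leq C/\psi(s)<\infty$. For sufficiency I will augment $\psi_0$ by a strictly increasing correction of uniformly bounded integral. Pick $(r_k)$ dense in $(0,\infty)$ and set $a_k:=2^{-k}/(1+\sup_\nu\nu((r_k,\infty)))$, which is positive by (\ref{convexcrit1}). Then $\phi(s):=\sum_k a_k\mathbf{1}_{(r_k,\infty)}(s)$ is nondecreasing and strictly increasing by density of $(r_k)$, while $\sup_\nu\int\phi\,d\nu\leq \sum_k 2^{-k}\leq 1$, so $\psi:=\psi_0+\phi$ meets every requirement.

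The main subtlety is that $\mathcal M(\R_+)$ consists of possibly infinite measures, so no bound on $\nu$ over bounded sets is available; every step must depend only on the tail functionals $\nu((s,\infty))$. In particular, the tempting convex construction $\psi(s)=\sum_n (s-t_n)^+$ with $\sup_\nu\int_{t_n}^\infty (s-t_n)\,d\nu$ summable will not work in general, because that tail functional need not even tend to $0$ under tightness alone (consider $\nu_n=n^{-1}\delta_n$). The key insight is that in the convex case one does not need the infinite sum at all: a single term $(s-M)^+$ already suffices, which is what makes the convex part clean.
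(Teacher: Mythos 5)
Your proof is correct. For the equivalence of (i) and (ii), the continuity claim, and the convexity claim it follows essentially the same route as the paper: Markov's inequality for (ii)$\Rightarrow$(i); the tail sum $\sum_n \mathbf 1_{(s_n,\infty)}$ with $\sup_\nu\nu((s_n,\infty))\le 2^{-n}$ plus Tonelli for (i)$\Rightarrow$(ii); the dominated piecewise-linear interpolant for continuity; and $(s-M)^+$ together with a linear lower bound on a convex $\psi$ (the paper uses an affine minorant $\alpha s-\beta$, you use monotonicity of $s\mapsto\psi(s)/s$ -- same content). For the strictly increasing part you take a genuinely different route: the paper partitions $[0,M]$ into the intervals $(\tfrac{M}{n+1},\tfrac{M}{n}]$ and builds a continuous piecewise-linear correction with summable weights $c^n\alpha^n$, whereas you sum weighted indicators $a_k\mathbf 1_{(r_k,\infty)}$ over a dense sequence $(r_k)$. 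Your construction is shorter, but it produces a $\psi$ with a dense set of jumps, while the paper's is simultaneously continuous and strictly increasing -- which is what is actually used downstream (in the proof of Lemma \ref{corui1} the function $\eta$ must be continuous \emph{and} strictly increasing at once to yield a continuously differentiable, strictly convex $\psi_\alpha$). If you want that stronger conclusion, replace $\mathbf 1_{(r_k,\infty)}(s)$ by $\min\{1,(s-r_k)^+\}$: it is dominated by the indicator, continuous, and the sum is still strictly increasing.

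One small slip: you assert that $\sup_{\nu\in K}\int_0^\infty(s-M)^+\,{\rm d}\nu<\infty$ already forces $\sup_{\nu\in K}\nu\big((M,\infty)\big)<\infty$ ``by the Markov step''. That is false for the \emph{same} $M$ (consider $\nu_\varepsilon=\varepsilon^{-1}\delta_{M+\varepsilon}$, which keeps the integral equal to $1$ while $\nu_\varepsilon((M,\infty))=\varepsilon^{-1}$); Markov only gives $\sup_\nu\nu\big((s,\infty)\big)\le C/(s-M)$ for $s>M$. This is harmless because \eqref{convexcrit} is quantified ``for some $M\ge 0$'': for $M>0$ one has $\int_{2M}^\infty s\,{\rm d}\nu\le 2\int_0^\infty(s-M)^+\,{\rm d}\nu$ since $s\le 2(s-M)$ for $s\ge 2M$, and the case $M=0$ and the reverse inequality $\int_0^\infty(s-M)^+\,{\rm d}\nu\le\int_M^\infty s\,{\rm d}\nu$ are immediate; so the two forms are equivalent after enlarging $M$. (The paper does not prove this internal equivalence at all, so your adding it is welcome -- just route it through a larger $M$ rather than through a bound on $\nu((M,\infty))$.) Your closing observation that the naive summable convex construction fails for possibly infinite measures, illustrated by $\nu_n=n^{-1}\delta_n$, is correct and well taken.
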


 Lemma \ref{firstcrittight} answers an open question by an anonymous referee concerning Lemma 2.1 in \cite{MR3412766}, namely if the function $\psi$ in (ii) can be chosen to be convex. In the previous lemma, we have seen that $\psi$ can be chosen to be convex if and only if \eqref{convexcrit} is satisfied for some $M\geq 0$. In the special case, where $K$ is a set of probability measures, \eqref{convexcrit} is equivalent to the uniform boundedness of the first moments of the elements of $K$, i.e.
 \[
  \sup_{\nu\in K} \int_0^\infty s\, {\rm d}\nu(s)<\infty.
 \]

\begin{definition}
Let $K\subset \mathcal M(\R_+)$. Then, we say that a nondecreasing map $\varphi\colon[0,\infty)\to [0,\infty)$ is \textit{uniformly integrable (u.i)} for $K$ if
 \[
  \sup_{\nu\in K}\int_M^\infty \varphi(s)\, {\rm d}\nu(s)\to 0 \quad \text{as }M\to \infty.
 \]
\end{definition}

We would like to mention at this point that the name ``uniform integrability'' is actually slightly misleading, since, for sets of nonfinite measures, uniform integrability does not imply integrability, not even for singletons.

\begin{remark}\label{remuni1}
 Let $\varphi\colon[0,\infty)\to [0,\infty)$ be nondecreasing. Then, the uniform integrability of $\varphi$ for a set $K\subset \mathcal M(\R_+)$ can be divided into three cases:
 \begin{enumerate}
  \item[(i)] $\varphi(s)=0$ for all $s\geq 0$. Then, $\varphi$ is always u.i. for $K$.
  \item[(ii)] $\varphi$ is bounded and $\varphi(s)\neq 0$ for some $s\geq 0$. Then, $\varphi$ is u.i. for $K$ if and only if $K$ is tight.
  \item[(iii)] $\varphi(s)\to \infty$ as $s\to \infty$. Then, $\varphi$ is u.i. for $K$ if and only if the identity $[0,\infty)\to [0,\infty),\;s\mapsto s$ is u.i. for the set $\{\mu\circ\varphi^{-1}\, |\, \mu\in K\}$.
 \end{enumerate}
 Hence, except in the trivial case (i), the question of uniform integrability of $\varphi$ can be reduced to the question of tightness, which has already been discussed in Lemma \ref{firstcrittight} or uniform integrability of the identity. Therefore, in the next lemma, which is a generalized version of the De La Vall\'{e}e Poussin Lemma (see also \cite{MR3412766}), we will just discuss the uniform integrability of the identity. The proof is a direct consequence of Lemma \ref{firstcrittight}, and is also relegated to the Appendix \ref{appendB}.
\end{remark}

In the following, we make use of the convention $\tfrac{0}{0}:=0$.

\begin{lemma}\label{corui1}
 Let $K\subset \mathcal M(\R_+)$. Then, the following statements are equivalent:
 \begin{enumerate}
  \item[(i)] The identity $[0,\infty)\to [0,\infty),\;s\mapsto s$ is uniformly integrable for $K$,
  \item[(ii)] There exists a nondecreasing function $\psi\colon [0,\infty)\to [0,\infty)$ with $\psi(0)=0$ such that $[0,\infty)\to [0,\infty),\; s\mapsto \tfrac{\psi(s)}{s}$ is nondecreasing, $\tfrac{\psi(s)}{s}\to \infty$ as $s\to \infty$ and
 \[
  \sup_{\nu\in K}\int_0^\infty \psi(s)\, {\rm d}\nu(s)<\infty.
 \]
 \end{enumerate}
 The function $\psi$ in (ii) can be chosen to be continuously differentiable, convex and u.i. for $K$. Additionally, the function $\psi$ can be chosen to be strictly convex (and thus strictly increasing) if and only if \eqref{convexcrit1} is satisfied.
\end{lemma}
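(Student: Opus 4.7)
The implication (ii)$\Rightarrow$(i) is the direct one. Since $\psi(s)/s$ is nondecreasing with limit $+\infty$, one has $s\leq \tfrac{M}{\psi(M)}\psi(s)$ for every $s\geq M>0$, so
\[
\sup_{\nu\in K}\int_M^\infty s\,{\rm d}\nu(s)\leq \frac{M}{\psi(M)}\sup_{\nu\in K}\int_0^\infty \psi(s)\,{\rm d}\nu(s),
\]
and the right-hand side tends to $0$ as $M\to\infty$.

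For the converse, my plan is to reduce matters to Lemma \ref{firstcrittight} via a change of measure. For each $\nu\in K$, define the auxiliary $\sigma$-finite measure $\tilde\nu\in\mathcal M(\R_+)$ by $\tilde\nu(A):=\int_A s\,{\rm d}\nu(s)$; then $\tilde\nu\big((M,\infty)\big)=\int_M^\infty s\,{\rm d}\nu(s)$, so (i) is precisely the statement that $\tilde K:=\{\tilde\nu:\nu\in K\}$ is tight. Lemma \ref{firstcrittight} then produces a continuous, nondecreasing $\varphi\colon[0,\infty)\to[0,\infty)$ with $\varphi(0)=0$, $\varphi(s)\to\infty$ and $\sup_{\tilde\nu\in\tilde K}\int\varphi\,{\rm d}\tilde\nu<\infty$. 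I then set
\[
\psi(s):=\int_0^s \varphi(u)\,{\rm d}u.
\]
Since $\varphi$ is continuous and nondecreasing, $\psi$ is automatically $C^1$, convex, $\psi(0)=0$, and $\psi(s)\leq s\varphi(s)$, yielding $\int_0^\infty\psi\,{\rm d}\nu\leq\int_0^\infty \varphi\,{\rm d}\tilde\nu$ uniformly in $\nu\in K$. Convexity together with $\psi(0)=0$ also gives that $\psi(s)/s$ is nondecreasing with limit $\lim_{s\to\infty}\varphi(s)=\infty$. For the u.i.\ of $\psi$ for $K$, the same inequality gives $\int_M^\infty\psi\,{\rm d}\nu\leq\int_M^\infty\varphi\,{\rm d}\tilde\nu$, so it suffices that the $\varphi$ produced by Lemma \ref{firstcrittight} is itself u.i.\ for $\tilde K$; this is visible from its proof once the anchor points $b_n$ are chosen so that $\sup_{\tilde\nu}\tilde\nu((b_n,\infty))\leq 4^{-n}$.

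For the strict-convexity assertion, strict convexity of $\psi$ is equivalent to strict monotonicity of $\varphi=\psi'$. By the last part of Lemma \ref{firstcrittight} applied to $\tilde K$, such a $\varphi$ can be chosen if and only if
\[
\sup_{\tilde\nu\in \tilde K}\tilde\nu\big((s,\infty)\big)=\sup_{\nu\in K}\int_s^\infty u\,{\rm d}\nu(u)<\infty\quad\text{for every }s>0.
\]
Assuming \eqref{convexcrit1} and choosing $L$ with $\sup_\nu\int_L^\infty u\,{\rm d}\nu(u)\leq 1$, the splitting $\int_s^\infty u\,{\rm d}\nu\leq L\nu((s,\infty))+\int_L^\infty u\,{\rm d}\nu$ is uniformly bounded in $\nu$; the reverse implication is the trivial bound $\int_s^\infty u\,{\rm d}\nu\geq s\nu((s,\infty))$. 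For the necessity of \eqref{convexcrit1}, strict convexity of $\psi$ combined with $\psi(0)=0$ and $\psi\geq 0$ forces $\psi(s_0)>0$ for every $s_0>0$ (apply strict convexity with base point $0$); if \eqref{convexcrit1} failed at some $s_0>0$, a sequence $\nu_k\in K$ with $\nu_k((s_0,\infty))\to\infty$ would satisfy $\int\psi\,{\rm d}\nu_k\geq \psi(s_0)\nu_k((s_0,\infty))\to\infty$, contradicting (ii).

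The step I expect to be the main technical obstacle is the u.i.\ strengthening used above: Lemma \ref{firstcrittight} as stated only asserts uniform boundedness of $\int\varphi\,{\rm d}\tilde\nu$, whereas to conclude u.i.\ of $\psi$ for $K$ one needs $\sup_{\tilde\nu}\int_M^\infty\varphi\,{\rm d}\tilde\nu\to 0$. I would address this either by revisiting the proof of Lemma \ref{firstcrittight} with the faster geometric decay $4^{-n}$ of the anchor sequence, or by inserting a short self-contained De La Vall\'ee Poussin-type construction directly tailored to $\tilde K$; the rest of the argument is a routine combination of Lemma \ref{firstcrittight} with the convex-from-integral construction.
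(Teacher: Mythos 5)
Your argument is correct and follows the paper's reduction almost verbatim: the change of measure $\tilde\nu(A)=\int_A s\,{\rm d}\nu(s)$ (the paper's $\nu_1$), the application of Lemma \ref{firstcrittight} to $\tilde K$, and the antiderivative construction are exactly what the paper does, and your treatment of the strict-convexity equivalence (sufficiency via the ``strictly increasing'' clause of Lemma \ref{firstcrittight}, necessity via $\psi(s_0)>0$ and Markov) also matches. The one place you diverge is the step you yourself flag: to make $\psi$ u.i.\ for $K$ you take $\psi=\int_0^\cdot\varphi$ and need $\varphi$ itself to be u.i.\ for $\tilde K$, which forces you to reopen the proof of Lemma \ref{firstcrittight}. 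That fix does work --- in fact no change of decay rate is needed, since the function built there satisfies $\varphi\le\sum_n 1_{(M^n,\infty)}$ and hence $\int_{M^k}^\infty\varphi\,{\rm d}\tilde\nu\le k\,2^{-k}+\sum_{n>k}2^{-n}\to 0$, and the strictly increasing modification only adds a bounded piece, which is u.i.\ by tightness of $\tilde K$ --- but you should carry this verification out rather than leave it as a plan. The paper sidesteps the issue entirely with a neat trick: it sets $\psi_\alpha(s):=\int_0^s\eta(u)^\alpha\,{\rm d}u$ for $\alpha\in(0,1)$ and uses Jensen to get $\psi_\alpha(s)\le\eta(s)^{\alpha-1}\,\eta(s)s$, whence
\[
\sup_{\nu\in K}\int_s^\infty\psi_\alpha(u)\,{\rm d}\nu(u)\le\eta(s)^{\alpha-1}\sup_{\nu\in K}\int_0^\infty\eta(u)u\,{\rm d}\nu(u)\to 0,
\]
so uniform integrability follows from the mere boundedness conclusion of Lemma \ref{firstcrittight}, with no need to strengthen it; your route costs a second pass through that lemma's proof, while the paper's costs only the exponent $\alpha<1$ and a Jensen estimate.
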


 Notice that in the previous lemma, the function $\psi$ in (ii) can be chosen to be strictly convex and u.i. if $K$ is, for example, a set of probability measures. This is actually quite remarkable, since it shows that uniform integrability is preserved under a ``small'' perturbation in terms of a strictly convex transformation $\psi$ with $\frac{\psi(s)}{s}\to \infty$ as $s\to \infty$. It can thus be seen as a perturbation result for families of uniformly integrable functions and answers an open question by an anonymous referee concerning Lemma 1.2 in \cite{MR3412766}. In view of the standard application of the De La Vall\'{e}e Poussin Lemma, where the function $\psi$ is chosen to be $\psi(s)=s^p$ for some suitable $p>1$, this perturbation result is quite natural, since, in this case, any perturbation of the form $s\mapsto s^q$ for $q\in (1,p)$ would remain u.i.

\begin{corollary}\label{corui2}
 Let $H$ be a set of random variables or random vectors (not necesarily of the same dimension) on a probability space $(\Omega,\mathcal F,\mathbb P)$ and $p\in [0,\infty)$. Then, the following statements are equivalent:
 \begin{enumerate}
  \item[(i)] $H$ is uniformly integrable (u.i.), i.e.
  \[
   \sup_{X\in H}\int_{\{|X|> s\}} |X|\, {\rm d}\mathbb P\to 0 \quad \text{as }s\to \infty.
  \]
  \item[(ii)] There exists a nondecreasing function $\psi\colon [0,\infty)\to [0,\infty)$ with $\psi(0)=0$, $\tfrac{\psi(s)}{s}\to \infty$ as $s\to \infty$ and
  \[
   \sup_{X\in H}\int_\Omega \psi(|X|)\, {\rm d}\mathbb P<\infty.
  \]
 \end{enumerate}
 The function $\psi$ in (ii) can be chosen to be continuously differentiable, strictly convex and in such a way that $\big\{\psi(|X|)\, \big|\, X\in H\big\}$ is again u.i.
\end{corollary}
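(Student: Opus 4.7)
The plan is to reduce Corollary~\ref{corui2} to Lemma~\ref{corui1} by passing from random variables on $(\Omega,\FF,\P)$ to their distributions on $\R_+$. Concretely, I would introduce the set of pushforward measures
\[
K:=\bigl\{\P\circ|X|^{-1}\bigm|X\in H\bigr\}\subset\PP(\R_+)\subset\mathcal M(\R_+),
\]
where $|\cdot|$ denotes (any fixed) norm in the random-vector case. By the transport formula, for every nondecreasing $\varphi\colon[0,\infty)\to[0,\infty)$ and every $X\in H$ one has $\int_\Omega\varphi(|X|)\,\mathrm d\P=\int_0^\infty\varphi(s)\,\mathrm d\nu(s)$ with $\nu=\P\circ|X|^{-1}$. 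In particular, (i) of the corollary reads $\sup_{\nu\in K}\int_s^\infty t\,\mathrm d\nu(t)\to0$ as $s\to\infty$, which is exactly the statement that the identity is uniformly integrable for $K$ in the sense of the definition preceding Lemma~\ref{corui1}; likewise the integral condition in (ii) becomes $\sup_{\nu\in K}\int_0^\infty\psi(s)\,\mathrm d\nu(s)<\infty$. Thus the equivalence (i)$\Leftrightarrow$(ii) follows immediately from the corresponding equivalence in Lemma~\ref{corui1}.

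For the supplementary assertions on the shape of $\psi$, I would observe that each $\nu\in K$ is a probability measure, so $\nu\bigl((s,\infty)\bigr)\le1<\infty$ for every $s>0$; condition \eqref{convexcrit1} is therefore automatic. The last sentence of Lemma~\ref{corui1} then permits $\psi$ to be chosen continuously differentiable, strictly convex, and uniformly integrable for $K$. To translate the final property back to the family $\{\psi(|X|)\mid X\in H\}$, I would invoke Remark~\ref{remuni1}(iii) for the nondecreasing map $\psi$ and the set $K$: uniform integrability of $\psi$ for $K$ is equivalent to uniform integrability of the identity for the set $\{\nu\circ\psi^{-1}\mid\nu\in K\}=\{\P\circ\psi(|X|)^{-1}\mid X\in H\}$. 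Pushing this back to $\Omega$ via the transport formula is exactly the uniform integrability of $\{\psi(|X|)\mid X\in H\}$.

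I do not foresee any real obstacle: the sole delicate point is keeping track of $\{|X|>M\}$ versus $\{\psi(|X|)>M\}$ when verifying the last claim, but this is handled cleanly by the transport argument above, together with the fact that the resulting $\psi$ is strictly increasing and unbounded. The parameter $p\in[0,\infty)$ appearing in the statement plays no role in the equivalence itself.
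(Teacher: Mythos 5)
Your proposal is correct and follows exactly the paper's route: the paper's entire proof is the single observation that one should apply the preceding results to $K:=\{\mathbb P\circ|X|^{-1}\mid X\in H\}$, which is precisely your reduction. The additional details you supply (that \eqref{convexcrit1} holds automatically for probability measures, and the translation of uniform integrability of $\psi$ for $K$ back to the family $\{\psi(|X|)\mid X\in H\}$ via the strict monotonicity and unboundedness of $\psi$) are accurate elaborations of what the paper leaves implicit.
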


\begin{proof}
 This follows from the previous results by considering $K:=\big\{\mathbb P\circ |X|^{-1}\, \big|\, X\in H\big\}$.
\end{proof}

\section{Completeness}\label{sec.completeness}

In this section, we discuss the completeness of lattice orders arising from first and second order stochastic dominance. Throughout this section, let $\BB(\R)$ denote the Borel $\sigma$-algebra on $\R$, $\PP(\R)$ denote the set of all probability measures on $\BB(\R)$ and $\PP_1(\R)$ denote the set of all probability measures on $\BB(\R)$ with finite first moment, i.e. all probability measures $\mu$ on $\BB(\R)$ with
\[
 \int_\R |x|\, {\rm d}\mu(x)<\infty.
\]
\subsection{First order stochastic dominance}

Throughout this section, we will identify a distribution $\mu\in \PP(\R)$ by its survival function $\mu_0$, i.e. we identify
\[
 \mu(s)=\mu_0(s):=\mu\big((s,\infty)\big)
\]
for all $s\in \R$. On $\PP(\R)$, we consider the partial order $\leq_{\rm st}$ arising from \textit{first order stochastic dominance}, given by
\[
 \mu\leq_{\rm st}\nu \quad \text{if and only if}\quad \mu_0(s)\leq \nu_0(s)\quad \text{for all }s\in \R.
\]
Recall that, for $\mu,\nu\in \PP(\R)$, $\mu\leq_{\rm st}\nu$ if and only if
\[
 \int_\R h(x)\, {\rm d}\mu (x)\leq \int_\R h(x)\, {\rm d}\nu(x)
\]
for all nondecreasing functions $h\colon \R\to \R$. For a detailed discussion on the properties of the partial order $\leq_{\rm st}$, we refer to \cite[Section 1.A]{MR2265633}.

\begin{remark}\label{minimaandmaxima}\
\begin{enumerate}
 \item[a)] By identifying $\mu$ with its survival function $\mu_0$, the set $\mathcal P(\R)$ coincides with the set of all nonincreasing right-continuous functions $F\colon \R\to [0,1]$ with $\lim_{s\to -\infty}F(s)=1$ and $\lim_{s\to \infty} F(s)=0$. In particular, the partial order $\leq_{\rm st}$ induces a lattice structure on $\PP(\R)$ via
 \[
  \qquad \quad \big(\mu\vee_{\rm st}\nu\big)(s):=\mu_0(s)\vee \nu_0(s)\quad \text{and}\quad \big(\mu\wedge_{\rm st}\nu\big)(s):=\mu_0(s)\wedge \nu_0(s)\quad \text{for all }s\in \R.
 \]
 Further, we would like to recall that the weak topology on $\PP(\R)$ is metrizable and that the weak convergence coincides with the pointwise convergence of survival functions at every continuity point, i.e. $\mu^n\to \mu$ weakly as $n\to \infty$ if and only if
 \[
  \qquad\mu_0^n(s)\to \mu_0(s)\quad\text{as }n\to \infty\quad \text{for every continuity point }s\in \R\text{ of }\mu_0.
 \]
 Therefore, the weak convergence behaves well with the pointwise lattice operations $\vee_{\rm st}$ and $\wedge_{\rm st}$. More precisely, the maps $(\mu,\nu)\mapsto \mu \vee_{\rm st} \nu $ and $(\mu,\nu)\mapsto \mu \wedge_{\rm st} \nu $ are continuous $\mathcal P(\R)\times \mathcal P(\R)\to \mathcal P(\R)$. Moreover, the weak topology is finer than the interval topology, since, by the previous argumentation, every closed interval is weakly closed.
 \item[b)] Recall that a nonincreasing function $\R\to \R$ is right-continuous if and only if it is lower semicontinuous (lsc). Hence, for a sequence $(\mu^n)_{n\in \N}\in \mathcal P(\R)$, which is bounded above, the supremum $\sup_{n\in \N}\mu^n$ w.r.t.~$\leq_{\rm st}$ exists, and is exactly the pointwise supremum of the survival functions $(\mu_0^n)_{n \in \mathbb{N}}$.
 \item[c)] For a nonincreasing function $F\colon \R\to \R$, we define its lsc-envelope $F_*\colon \R\to \R$ by
 \[
  F_*(s):=\sup_{\delta >0}F(s+\delta)\quad \text{for }s\in \R
 \]
 Notice that $F(s)\geq F_*(s)\geq F(s+\varepsilon)$ for all $s\in \R$ and $\varepsilon >0$. That is, $F_*$ differs from $F$ only at discontinuity points of $F$. Intuitively speaking, $F_*$ is the right continuous version of $F$. For a sequence $(\mu^n)_{n\in \N}\in \mathcal P(\R)$, which is bounded below, the infimum $\inf_{n\in \N}\mu^n$ w.r.t.~$\leq_{\rm st}$  exists, and is given by the lsc-envelope of the pointwise infimum of the survival functions $(\mu_0^n)_{n\in \N}$. That is, one has to modify the pointwise infimum at all its discontinuity points in order to be right continuous.
\item[d)] A combination of the previous remarks, leads to the following insight: Every bounded and nondecreasing or nonincreasing sequence $(\mu^n)_{n\in \N}\subset \mathcal P(\R)$ converges weakly to its supremum or infimum w.r.t.~$\leq_{\rm st}$, respectively.
\end{enumerate}
\end{remark}

Let $(S,\mathcal S,\pi)$ be a $\sigma$-finite measure space. We denote the Borel $\sigma$-algebra of the weak topology by $\mathcal B\big(\PP(\R)\big)$ and the lattice of all equivalence classes of $\mathcal S$-$\mathcal B\big(\PP(\R)\big)$-measurable functions $S\to \mathcal P(\R)$ by $L_{\rm st}^0=L^0\big(\PP(\R)\big)=L^0\big(S,\mathcal S,\pi;\PP(\R)\big)$. An arbitrary element $\mu$ of $L_{\rm st}^0$ will be denoted in the form $\mu=(\mu_t)_{t\in S}$. On $L_{\rm st}^0$ we consider the order relation $\leq_{L_{\rm st}^0}$, given by $\mu \leq_{L_{\rm st}^0} \nu$ if and only if $\mu_t \leq_{\rm st} \nu_t$ for $\pi$-a.a.\ $t\in S$.

\begin{theorem}\label{thm.completeness}\
 \begin{enumerate}
  \item[a)] The lattice $L_{\rm st}^0$ is Dedekind super complete (cf. Definition \ref{def.complete}).  
  \item[b)] If $M\subset L_{\rm st}^0$ is a nonempty set, which is bounded above or below and directed upwards or downwards (cf. Definition \ref{def.complete}), then there exists a sequence $(\mu^n)_{n\in \N}\subset M$ with $\mu^n\leq_{L_{\rm st}^0} \mu^{n+1}$ for all $n\in \N$ and $\mu^n\to \sup M$ weakly $\pi$-a.e.\ as $n\to \infty$ or $\mu^n\geq_{L_{\rm st}^0} \mu^{n+1}$ for all $n\in \N$ and $\mu^n\to \inf M$ weakly $\pi$-a.e.\ as $n\to \infty$, respectively.
 \end{enumerate}
\end{theorem}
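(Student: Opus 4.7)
My plan is to deduce both parts from the abstract auxiliary Lemma \ref{lem.auxres} in the Appendix, together with the concrete description of $\leq_{\rm st}$ via survival functions recalled in Remark \ref{minimaandmaxima}. The guiding observation is that a flow $\mu\in L_{\rm st}^0$ is, up to a $\pi$-null set, already determined by the countable family of values of its survival functions at rational arguments, which reduces completeness questions on $L_{\rm st}^0$ to the classical essential-supremum construction on $L^0\big(S,\mathcal S,\pi;[0,1]\big)$.

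For part a), I would introduce, for every $s\in\Q$, the evaluation map
\[
 \Phi_s\colon L_{\rm st}^0\to L^0\big(S,\mathcal S,\pi;[0,1]\big),\qquad \Phi_s(\mu):=\big[t\mapsto \mu_{t,0}(s)\big],
\]
which is well defined since $\nu\mapsto \nu_0(s)$ is Borel on $\PP(\R)$, and manifestly monotone. The key point is that the family $(\Phi_s)_{s\in\Q}$ is jointly order-reflecting: if $\Phi_s(\mu)\leq \Phi_s(\nu)$ in $L^0\big(S,\mathcal S,\pi;[0,1]\big)$ for every $s\in\Q$, then, by taking the countable union of the associated $\pi$-null sets and invoking the right-continuity of survival functions, we obtain $\mu_t\leq_{\rm st}\nu_t$ for $\pi$-a.a.\ $t\in S$. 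Because the codomain is Dedekind super complete, by the classical essential-supremum result (\cite[Theorem A.32]{MR2169807}), Lemma \ref{lem.auxres} applies and yields, for every bounded nonempty $M\subset L_{\rm st}^0$, a countable subset with identical $\leq_{L_{\rm st}^0}$-supremum and infimum. These sup and inf are recovered by performing the pointwise (in $t$) $\leq_{\rm st}$-operation in $\PP(\R)$: by Kertz--R\"osler \cite{MR1833858} together with Remark \ref{minimaandmaxima}(b) and (c), this amounts to the pointwise supremum, respectively the lsc-envelope of the pointwise infimum, of the survival functions of the countable subfamily.

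For part b), I would leverage the directedness to promote the countable subfamily to a monotone sequence. Assume $M$ is bounded above and upward-directed; part a) provides a countable set $\{\nu^k\}_{k\in\N}\subset M$ with $\sup_{L_{\rm st}^0}\{\nu^k\, |\, k\in\N\}=\sup_{L_{\rm st}^0} M$. Set $\mu^1:=\nu^1$ and inductively pick, using upward-directedness, $\mu^{n+1}\in M$ with $\mu^{n+1}\geq_{L_{\rm st}^0} \mu^n$ and $\mu^{n+1}\geq_{L_{\rm st}^0}\nu^{n+1}$. The sequence $(\mu^n)_{n\in\N}$ is then nondecreasing, bounded above and dominates every $\nu^k$, so $\sup_{n\in\N}\mu^n=\sup_{L_{\rm st}^0} M$. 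Applying Remark \ref{minimaandmaxima}(d) pointwise in $t$, off a $\pi$-null set, the nondecreasing bounded sequence $(\mu^n_t)_n$ converges weakly in $\PP(\R)$ to its $\leq_{\rm st}$-supremum, which coincides with $(\sup M)_t$. The downward-directed case is symmetric, using the lsc-envelope for the infimum as in Remark \ref{minimaandmaxima}(c).

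The main technical point I expect to fight with is the clean verification of joint order-reflection and of the measurability of the lattice operations: one has to handle the countably many $\pi$-null sets coming from the rational evaluations, and ensure that the pointwise $\leq_{\rm st}$-supremum/infimum of the countable subfamily really defines an $\mathcal S$-$\BB(\PP(\R))$-measurable flow. The latter should follow from the fact that $\BB(\PP(\R))$ is generated by the evaluation maps $\nu\mapsto \nu_0(s)$ for $s\in\Q$, so that measurability in the weak topology is equivalent to coordinate-wise measurability of the survival functions at rationals, which is preserved under countable pointwise suprema and infima.
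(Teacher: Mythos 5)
Your argument is correct and rests on the same skeleton as the paper's proof: Dedekind $\sigma$-completeness of $L_{\rm st}^0$ via Remark \ref{minimaandmaxima}, then Lemma \ref{lem.auxres} for part a), and Remark \ref{rem.auxres} combined with Remark \ref{minimaandmaxima} d) applied pointwise in $t$ for part b) (your recursive construction of the monotone sequence from the countable subfamily is literally the content of Remark \ref{rem.auxres}). Where you genuinely diverge is in the choice of the strictly increasing map required by Lemma \ref{lem.auxres}. The paper takes the target lattice to be $R=\R$ and uses the single functional $F(\mu)=\int_S\int_\R\Phi(x)\,{\rm d}\mu_t(x)\,{\rm d}\pi(t)$ with $\Phi$ a bounded strictly increasing function (the normal c.d.f.), whose strict monotonicity w.r.t.\ $\leq_{\rm st}$ is a standard fact about stochastic orders; since $\R$ is trivially Dedekind super complete, nothing further needs to be checked about the codomain. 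You instead use the countable family of rational evaluations $\Phi_s$ landing in $L^0\big(S,\mathcal S,\pi;[0,1]\big)$. This works, but note two points you leave implicit. First, Lemma \ref{lem.auxres} is stated for a \emph{single} strictly increasing map into a \emph{single} Dedekind super complete lattice, so you must bundle the family into one map $\mu\mapsto(\Phi_s(\mu))_{s\in\Q}$ with values in the countable product $\prod_{s\in\Q}L^0\big(S,\mathcal S,\pi;[0,1]\big)$, verify that joint order-reflection gives property (ii) of Definition \ref{def:increasing} for the bundled map, and verify that a countable product of Dedekind super complete lattices is again Dedekind super complete (true, by taking the union of the countable subsets that realize the coordinate-wise suprema, but it is a step). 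Second, the Dedekind super completeness of $L^0\big(S,\mathcal S,\pi;[0,1]\big)$, i.e.\ the essential supremum theorem you cite, is itself proved by exactly the device of Lemma \ref{lem.auxres} with $R=\R$, and requires the same reduction from $\sigma$-finite to finite $\pi$ that the paper performs at the outset; so your route is somewhat heavier and partially re-derives what the paper's one-line choice of $F$ delivers directly. What your route buys in exchange is that the separating family of rational evaluations makes the measurability of the pointwise lattice operations on flows completely transparent, since $\BB\big(\PP(\R)\big)$ is generated by these evaluations.
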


\begin{proof}
 Since every $\sigma$-finite measure can be transformed into a finite measure without changing the null-sets, we may w.l.o.g. assume that $\pi$ is finite. By Remark \ref{minimaandmaxima}, $L_{\rm st}^0$ is Dedekind $\sigma$-complete. Let $\Phi$ be the cumulative distribution function of the standard normal distribution, i.e.
 \[
  \Phi(x):=\frac{1}{\sqrt{2\pi}}\int_{-\infty}^x e^{-y^2/2}\, {\rm d}y\quad\text{for all }x\in \R.
 \]
 Alternatively, one could consider e.g. $\Phi=\arctan$. The map $S\to \R, \; t\mapsto \int_\R\Phi(x)\, {\rm d}\mu_t(x)$ is $\mathcal S$-$\BB(\R)$-measurable for every $\mu\in L_{\rm st}^0$, since $\Phi\in C_b(\R)$ induces a continuous (w.r.t.~the weak topology) linear functional $\mathcal P(\R)\to \R$. Hence,
 \[
  F\colon L_{\rm st}^0\to \R,\quad \mu\mapsto \int_S\int_\R\Phi(x)\, {\rm d}\mu_t(x)\,{\rm d}\pi(t)
 \]
 is well-defined and strictly increasing (cf. Definition \ref{def:increasing}), since $\Phi$ is strictly increasing, see e.g. \cite[Theorem 1.A.8]{MR2265633}. The assertions now follow from Lemma \ref{lem.auxres} with $R=\R$, Remark \ref{rem.auxres} and Remark \ref{minimaandmaxima} d).
\end{proof}

In the case, where $S$ is a singleton and $\pi(S)>0$, we obtain the following corollary.

\begin{corollary}\label{cor.completeness}\
 \begin{enumerate}
  \item[a)] The lattice $\PP(\R)$, endowed with the lattice order $\leq_{\rm st}$, is Dedekind super complete.
  \item[b)] If $M\subset \PP(\R)$ is a nonempty set, which is bounded above or below and directed upwards or downwards, then there exists a sequence $(\mu^n)_{n\in \N}\subset M$ with $\mu^n\leq_{\rm st} \mu^{n+1}$ for all $n\in \N$ and $\mu^n\to \sup M$ weakly as $n\to \infty$ or $\mu^n\geq_{\rm st} \mu^{n+1}$ for all $n\in \N$ and $\mu^n\to \inf M$ weakly as $n\to \infty$.
 \end{enumerate}
\end{corollary}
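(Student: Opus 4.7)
The plan is to deduce both claims directly from Theorem \ref{thm.completeness} by specializing to the trivial measure space. Concretely, I would take $S=\{s_0\}$, $\mathcal S=\{\emptyset,S\}$, and $\pi(\{s_0\})=c$ for some $c>0$ (say $c=1$). With these choices, every map $S\to\PP(\R)$ is automatically $\mathcal S$-$\BB(\PP(\R))$-measurable, and two such maps agree $\pi$-almost everywhere if and only if they agree pointwise. Hence the evaluation map $\mu\mapsto\mu_{s_0}$ yields an order-preserving bijection $L_{\rm st}^0\to\PP(\R)$ that intertwines $\leq_{L_{\rm st}^0}$ with $\leq_{\rm st}$, transports the lattice operations, and transports $\pi$-a.e.\ weak convergence to weak convergence in $\PP(\R)$.

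Under this identification, part (a) is immediate from Theorem \ref{thm.completeness} (a): any bounded nonempty $A\subset\PP(\R)$ corresponds to a bounded nonempty subset of $L_{\rm st}^0$, which admits a countable subfamily with identical supremum and infimum w.r.t.~$\leq_{L_{\rm st}^0}$; pulling back via the bijection gives a countable subset of $A$ with identical supremum and infimum w.r.t.~$\leq_{\rm st}$.

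For part (b), I would simply note that directedness and boundedness of $M\subset\PP(\R)$ (w.r.t.~$\leq_{\rm st}$) translate to directedness and boundedness of the associated set in $L_{\rm st}^0$ (w.r.t.~$\leq_{L_{\rm st}^0}$). Theorem \ref{thm.completeness} (b) then supplies a monotone sequence $(\mu^n)_{n\in\N}\subset M$ (monotone in $\leq_{L_{\rm st}^0}$, hence in $\leq_{\rm st}$) converging $\pi$-a.e.\ weakly to $\sup M$ or $\inf M$; since $\pi$-a.e.\ weak convergence on a singleton is just weak convergence, the conclusion follows.

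There is essentially no obstacle here: once one checks the routine fact that the singleton $\sigma$-algebra makes all measurability and almost-everywhere conditions degenerate, the corollary is a direct specialization of Theorem \ref{thm.completeness}, exactly as foreshadowed in the sentence preceding the statement.
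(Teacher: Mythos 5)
Your proposal is correct and matches the paper's intended argument exactly: the paper derives the corollary from Theorem \ref{thm.completeness} by taking $S$ to be a singleton with $\pi(S)>0$, under which all measurability and $\pi$-a.e.\ conditions trivialize. The paper gives no further details, so your explicit verification of the order-preserving identification $L_{\rm st}^0\cong\PP(\R)$ is simply a careful write-up of the same specialization.
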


We now turn our focus on characterizing the complete sublattices of $\PP(\R)$. We start with the following lemma, which gives two more characterizations of tightness. It is basically a combination and generalization of \cite[Proposition 1]{MR2998767} and Lemma \ref{firstcrittight}.

\begin{lemma}\label{equivtight}
 Let $K\subset \mathcal P(\R)$. Then, the following statements are equivalent:
 \begin{enumerate}
  \item[(i)] $K$ is tight, i.e.~$\sup_{\mu\in K}\mu\big([-s,s]^c\big)\to 0$ as $s\to \infty$.
  \item[(ii)] $K$ is $\leq_{\rm st}$-bounded, i.e.~there exist $\underline\mu,\overline\mu\in \PP(\R)$ with $\underline\mu\leq_{\rm st} \mu\leq_{\rm st} \overline\mu$ for all $\mu\in K$.
  \item[(iii)] Every nonempty subset of $K$ has a least upper bound and a greatest lower bound.
  \item[(iv)] There exists a nondecreasing function $\psi\colon [0,\infty)\to [0,\infty)$ with $\psi(0)=0$, $\psi(s)\to \infty$ as $s\to \infty$ and
  \[
   \sup_{\mu\in K}\int_\R \psi(|x|) \, {\rm d}\mu(x)<\infty.
  \]
 \end{enumerate}
 The function $\psi$ in (iv) can be chosen to be continuous and strictly increasing. Moreover, the function $\psi$ can be chosen to be convex if and only if
 \[
  \sup_{\mu\in K}\int_\R |x|\, {\rm d}\mu(x) <\infty.
 \]
\end{lemma}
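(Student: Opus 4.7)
The plan is to establish the equivalences (i)$\Leftrightarrow$(iv), (i)$\Leftrightarrow$(ii) and (ii)$\Leftrightarrow$(iii). The equivalence (i)$\Leftrightarrow$(iv), together with the supplementary statements on continuity, strict monotonicity and convexity, reduces directly to Lemma \ref{firstcrittight} applied to the pushforward family $K':=\{\mu\circ|\cdot|^{-1}\,|\,\mu\in K\}\subset \mathcal{M}(\R_+)$. Tightness of $K$ on $\R$ is exactly tightness of $K'$ on $\R_+$, and $\int_\R \psi(|x|)\,{\rm d}\mu(x) = \int_0^\infty\psi(s)\,{\rm d}(\mu\circ|\cdot|^{-1})(s)$. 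Because the elements of $K'$ are probability measures, the auxiliary condition \eqref{convexcrit1} is automatic (giving the strictly increasing $\psi$ for free, in addition to continuity), while condition \eqref{convexcrit} collapses, up to a trivial estimate on $[0,M]$, to uniform boundedness of first moments $\sup_{\mu\in K}\int_\R|x|\,{\rm d}\mu(x)<\infty$, which yields the convexity characterization.

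For (i)$\Rightarrow$(ii) I would construct the bounds from the pointwise supremum and infimum of survival functions together with a right-continuous modification. Set $\overline F(s):=\sup_{\mu\in K}\mu_0(s)$ and $\underline F(s):=\inf_{\mu\in K}\mu_0(s)$, and pass to their right-continuous envelopes $\overline F_+(s):=\lim_{\delta\downarrow 0}\overline F(s+\delta)$ and $\underline F_+(s):=\lim_{\delta\downarrow 0}\underline F(s+\delta)$. These are nonincreasing and right-continuous by construction. Using the right-continuity of each $\mu_0$, one checks $\underline F_+(s)\leq \underline F(s)\leq\mu_0(s)\leq\overline F(s)\leq \overline F_+(s)\vee \sup_{\delta>0}\mu_0(s-\delta)$; more precisely, $\overline F_+(s)\geq\lim_{\delta\downarrow 0}\mu_0(s+\delta)=\mu_0(s)$, so $\mu_0(s)\leq \overline F_+(s)$ for each $\mu\in K$. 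Tightness implies $\overline F_+(s)\leq\overline F(s)\to 0$ and $\underline F_+(s)\leq\mu_0(s)\to 0$ as $s\to\infty$, while for $s\to -\infty$ one has $\overline F_+(s)\geq\mu_0(s)\to 1$ and, given $\varepsilon>0$, tightness supplies $N$ with $\underline F(s)\geq 1-\varepsilon$ for all $s<-N$, hence $\underline F_+(s)\geq 1-\varepsilon$ for all $s<-N-1$. Consequently $\overline F_+$ and $\underline F_+$ are survival functions of probability measures $\overline\mu,\underline\mu\in\PP(\R)$ with $\underline\mu\leq_{\rm st}\mu\leq_{\rm st}\overline\mu$ for every $\mu\in K$. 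The converse (ii)$\Rightarrow$(i) is immediate: $\mu_0(s)\leq\overline\mu_0(s)\to 0$ and $1-\mu_0(s)\leq 1-\underline\mu_0(s)\to 0$, both uniformly in $\mu\in K$.

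The equivalence (ii)$\Leftrightarrow$(iii) is an application of the Dedekind completeness of $(\PP(\R),\leq_{\rm st})$ recorded in Corollary \ref{cor.completeness}(a): any nonempty subset of a $\leq_{\rm st}$-bounded $K$ is itself bounded and therefore admits a least upper bound and a greatest lower bound in $\PP(\R)$, while (iii) applied to the nonempty subset $M=K$ (the case $K=\emptyset$ being trivial) produces the bounds required in (ii). The only genuinely technical point in the whole argument is the verification that the two envelopes $\overline F_+$, $\underline F_+$ remain legitimate survival functions, i.e.\ that their limit conditions at $-\infty$ survive the right-continuous modification; this requires the uniform-in-$\mu$ control near $-\infty$ extracted from tightness described above, after which everything else is routine.
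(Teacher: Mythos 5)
Your proof is correct and follows essentially the same route as the paper: (i)$\Leftrightarrow$(iv) via Lemma \ref{firstcrittight} applied to the pushforwards under $|\cdot|$, (ii)$\Leftrightarrow$(iii) via Corollary \ref{cor.completeness}, and (i)$\Rightarrow$(ii) by taking the pointwise supremum/infimum of survival functions and passing to right-continuous envelopes. The only (harmless) deviation is that you also apply the envelope to the supremum, which is redundant since a pointwise supremum of right-continuous nonincreasing functions is already lower semicontinuous and hence right-continuous.
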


\begin{proof}
 The equivalence of (i) and (iv) has been discussed in the previous section. By Corollary \ref{cor.completeness}, (ii) and (iii) are equivalent. If $K$ is $\leq_{\rm st}$-bounded, then
 \[
  \sup_{\mu\in K}\mu\big((s,\infty)\big)\leq \overline\mu_0(s)\to 0\quad \text{as }s\to \infty
 \]
 and
 \[
  \sup_{\mu\in K}\mu\big((-\infty,s)\big)=1-\inf_{\mu\in K}\mu\big([s,\infty)\big)\leq 1-\underline\mu_0(s)\to 0\quad \text{as }s\to -\infty,
 \]
 which shows that $K$ is tight. It remains to show that (i) implies (ii). We assume that $K$ is tight. Then,
 \begin{equation}\label{eq.tightminmax}
  \inf_{\mu\in K}\mu_0(s)\to 1\quad\text{as } s\to -\infty\quad \text{and}\quad \sup_{\mu\in K} \mu_0(s)\to 0\quad  \text{as }s\to \infty.
 \end{equation}
 We define $\underline\mu_0,\overline\mu_0\colon \mathbb{R} \rightarrow [0,1]$ by 
 \[
  \underline\mu_0(s):=\sup_{\delta>0}\inf_{\mu\in K}\mu_0(s+\delta) \quad \text{and} \quad \overline\mu_0(s):=\sup_{\mu\in K}\mu_0(s)
 \]
for $s\in \R$. Then, $\underline\mu_0$ and $\overline\mu_0$ are nonincreasing and right-continuous functions, cf. Remark \ref{minimaandmaxima}, and, by \eqref{eq.tightminmax}, $\lim_{s\to -\infty}F(s)=1$ and $\lim_{s\to \infty} F(s)=0$ for $F=\underline\mu_0,\overline\mu_0$. This shows that $\underline\mu_0$ and $\overline\mu_0$ give rise to two probability measures $\underline\mu,\overline\mu\in \mathcal P(\R)$ with $\underline\mu\leq_{\rm st} \mu\leq_{\rm st} \overline\mu$ for all $\mu\in K$.
\end{proof}


\begin{theorem}\label{complattice1order}
 Let $K$ be a sublattice of $\mathcal P(\R)$. Then, the following statements are equivalent:
 \begin{enumerate}
  \item[(ii)] $K$ is weakly compact.
  \item[(i)] $K$ is complete w.r.t.~the lattice order $\leq_{\rm st}$.
 \end{enumerate}
\end{theorem}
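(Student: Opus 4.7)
The plan is to prove each implication separately, using Lemma~\ref{equivtight} to exchange tightness for $\leq_{\rm st}$-boundedness and Corollary~\ref{cor.completeness} to approximate suprema and infima in $K$ by weakly convergent monotone sequences drawn from $K$.

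For the direction (ii)~$\Rightarrow$~(i), I would first note that weak compactness of $K$ gives weak closedness together with tightness by Prokhorov's theorem, and hence $\leq_{\rm st}$-boundedness via Lemma~\ref{equivtight}. Given a nonempty $M\subseteq K$, the sublattice property ensures that the collection $M^\vee$ of finite joins $\mu^{i_1}\vee_{\rm st}\cdots\vee_{\rm st}\mu^{i_l}$ of elements of $M$ is contained in $K$, is directed upwards, and is bounded above. Corollary~\ref{cor.completeness}~b) then produces a nondecreasing sequence in $M^\vee$ converging weakly to $\sup M^\vee=\sup M$, and weak closedness of $K$ places this limit in $K$. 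A symmetric argument using finite meets yields $\inf M\in K$, so $K$ is complete.

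For the converse (i)~$\Rightarrow$~(ii), completeness of $K$ immediately provides a top and a bottom element, so $K$ is $\leq_{\rm st}$-bounded and tight by Lemma~\ref{equivtight}; Prokhorov then makes the weak closure of $K$ weakly compact, and it remains to show that $K$ is weakly closed. Given $(\mu^n)\subset K$ with $\mu^n\to\mu$ weakly, I would let $\tau^n$ be the supremum of $\{\mu^k:k\geq n\}$ in $K$, which by completeness lies in $K$ and is nonincreasing in $n$, and then $\tau$ the infimum of $(\tau^n)_n$ in $K$. Applying Corollary~\ref{cor.completeness} to the directed set of finite joins of $\{\mu^k:k\geq n\}$ inside $K$, the survival function of $\tau^n$ coincides with the pointwise supremum $\sup_{k\geq n}\mu^k_0$, and by Remark~\ref{minimaandmaxima} the survival function of $\tau$ is the right-continuous version of $\inf_n\tau^n_0=\limsup_k\mu^k_0$. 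Weak convergence yields $\limsup_k\mu^k_0(s)=\mu_0(s)$ at every continuity point of $\mu_0$; since two right-continuous functions agreeing on a dense set must coincide, I conclude $\tau_0=\mu_0$, and hence $\mu=\tau\in K$.

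The main obstacle is the identification step in the second direction: one must show that the supremum $\tau^n$ and the infimum $\tau$ taken abstractly inside $K$ admit concrete descriptions in terms of the survival functions $\mu^k_0$. This is precisely where the combination of the sublattice property (to keep the approximating finite joins and meets within $K$) and completeness (to realize the infinite operations as legitimate elements of $K$) is used in full, so that Corollary~\ref{cor.completeness} can realize these abstract lattice-theoretic operations as weak limits of monotone sequences in $K$ and the interaction with the weak convergence of $(\mu^n)$ can be made explicit.
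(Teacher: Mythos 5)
Your argument is correct in substance and, for the implication that completeness implies weak compactness, it follows the paper's route almost exactly: establish weak closedness by realizing the weak limit as $\inf_n\sup_{k\ge n}\mu^k$, get $\leq_{\rm st}$-boundedness from completeness, deduce tightness from Lemma~\ref{equivtight}, and conclude with Prokhorov's theorem. Where you genuinely diverge is the direction from weak compactness to completeness: the paper disposes of it in one line by observing (Remark~\ref{minimaandmaxima}~a)) that the weak topology is finer than the interval topology, so a weakly compact sublattice is compact in the interval topology and hence complete by the Birkhoff--Frink theorem quoted in the introduction. Your constructive alternative --- the finite joins of $M$ form a directed subset of $K$, Corollary~\ref{cor.completeness}~b) produces a monotone sequence converging weakly to $\sup M$, and weak closedness puts the limit in $K$ --- avoids that black box and has the added benefit of showing that the supremum of $M$ \emph{computed in $\mathcal P(\R)$} lies in $K$, which is exactly the form of completeness the converse direction needs.

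One point deserves care, and you have correctly isolated it as the crux: the identification of $\tau^n$, the supremum of $\{\mu^k\,:\,k\ge n\}$ taken \emph{in $K$}, with the measure whose survival function is $\sup_{k\ge n}\mu^k_0$. Corollary~\ref{cor.completeness} only describes the supremum taken in $\mathcal P(\R)$; if ``complete'' is read purely order-theoretically (every nonempty subset of $K$ has a least upper bound in the poset $K$, with no requirement that it agree with the ambient one), the two suprema may differ, and the statement itself then fails: $K=\{\delta_0\}\cup\{\delta_{1+1/n}\,:\,n\in\N\}$ is a sublattice that is a complete lattice in its own right, yet it is not weakly closed. So ``complete'' must be understood as in the paper's proof, namely that suprema and infima formed in $\mathcal P(\R)$ belong to $K$; under that reading your identification step is immediate and the rest of your argument (including the comparison of two right-continuous survival functions on the dense set of continuity points of $\mu_0$) goes through. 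The paper's own proof makes the same silent identification when it writes $\inf_n\sup_{k\ge n}\mu^k\in K$, so this is a clarification of the hypothesis rather than a defect specific to your write-up.
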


\begin{proof}
 Since, by Remark \ref{minimaandmaxima} a), the weak topology is finer than the interval topology, it follows that every weakly compact subset of $\mathcal P(\R)$ is compact in the interval topology, and therefore complete. Now, assume that $K$ be complete. We first show that $K$ is weakly closed. Let $(\mu^n)_{n\in \N}\subset \mathcal P(\R)$ with $\mu^n\to \mu\in \mathcal P(\R)$ weakly as $n\to \infty$. Then,
 \[
  \mu=\inf_{n\in \N}\sup_{k\geq n} \mu^k\in K,
 \]
 since $K$ is complete. Here, we used the fact that the weak convergence is equivalent to the pointwise convergence at every continuity point. Since the weak topology is metrizable, it follows that $K$ is weakly closed. By the previous lemma, $K$ is tight, since it is complete and thus $\leq_{\rm st}$-bounded. Since $K$ is weakly closed, it is therefore weakly compact by Prokhorov's theorem (see e.g. \cite[Theorem 8.6.2]{MR2267655} or \cite[Theorems 6.1 and 6.2]{MR1700749}).  
\end{proof}


\subsection{Second order stochastic dominance and increasing convex order}

In this section, we identify a probability measure $\mu\in \PP_1(\R)$ via its \textit{(negative) integrated distribution function} $\mu_{1,-}$, which is given by
 \[
  \mu_{1,-}(s)=\int_\R (x-s)^-\, {\rm d}\mu(x)=-\int_{-\infty}^s \mu(u)\, {\rm d}u\quad \text{for all } s\in \R,
 \]
 and via its \textit{integrated survival function} $\mu_{1,+}$, which is given by
 \[
  \mu_{1,+}(s)=\int_\R (x-s)^+\, {\rm d}\mu(x)=\int_s^\infty \mu_0(u)\, {\rm d}u\quad \text{for all } s\in \R.
 \]
 On $\PP_1(\R)$, we then consider the partial order arising from \textit{second order stochastic dominance} (also called \textit{increasing concave order}) $\leq_{icv}$, given by
 \[
  \mu\leq_{\rm icv}\nu \quad \text{if and only if}\quad \mu_{1,-}(s)\leq \nu_{1,-}(s)\quad \text{for all }s\in \R.
 \]
 and the \textit{increasing convex order} $\leq_{\rm icx}$, given by
\[
 \mu\leq_{\rm icx}\nu \quad \text{if and only if}\quad \mu_{1,+}(s)\leq \nu_{1,+}1(s)\quad \text{for all }s\in \R.
\]
 Recall that, for $\mu,\nu\in \PP_1(\R)$, $\mu\leq_{\rm icv}\nu$ ($\mu\leq_{\rm icx}\nu$) if and only if
 \begin{equation}\label{cxfcnt}
  \int_\R h(x)\, {\rm d}\mu (x)\leq \int_\R h(x)\, {\rm d}\nu(x)
 \end{equation}
 for all nondecreasing concave (convex) functions $h\colon \R\to \R$. In particular, $\mu\leq_{\rm st} \nu$ implies $\mu\leq_{\rm icv}\nu$ and $\mu\leq_{\rm icx}\nu$. If \eqref{cxfcnt} holds for all convex functions $h\colon \R\to \R$, we write $\mu\leq_{\rm cx} \nu$ (\textit{convex order}). Notice that $\mu\leq_{\rm cx} \nu$ if and only if $\nu\leq_{\rm icv} \mu$ and $\mu\leq_{\rm icx} \nu$. For $\mu\in \PP_1(\R)$, let $\mu'\in \PP_1(\R)$ denote the probability measure, given by
 \[
  \mu'\big((-\infty,s]\big):=\mu\big([-s,\infty)\big) \quad \text{for all }s\in \R.
 \]
 That is, for a random variable $X$ on a probability space $(\Om,\FF,\P)$ with distribution $\mu$, $\mu'$ is the distribution of $-X$. Then, for all $\mu,\nu\in \PP_1(\R)$,
 \begin{equation}\label{icxicv}
  \mu\leq_{\rm icx} \nu \quad \text{if and only if}\quad \mu'\geq_{\rm icv}\nu'.
 \end{equation}
 For a detailed discussion on the properties of the partial orders $\leq_{\rm icv}$ and $\leq_{\rm icx}$, we refer to \cite[Section 4.A]{MR2265633}. In this subsection, we wil investigate the interplay between the partial orders $\leq_{\rm icv}$ and $\leq_{\rm icx}$ and the Wasserstein-$1$ topology on the Wasserstein-$1$ space $\PP_1(\R)$. For a definition of the Wasserstein-$1$ metric and a detailed discussion on its properties, we refer to \cite{MR2459454}.
\begin{remark}\label{minimaandmaxima2ndorder}\
 \begin{enumerate}
  \item[a)] Notice that by identifying $\mu$ via its integrated distribution (survival) function $\mu_{1,-}$ ($\mu_{1,+}$), $\mathcal P_1(\R)$ coincides with the set of all nonincreasing concave (convex) functions $\varphi\colon \R\to \R$ with $\varphi(s)+s-m\to 0$ as $s\to \infty$ ($s\to -\infty$) for some constant $m\in \R$ and $\varphi(s)\to 0$ as $s\to -\infty$ ($s\to \infty$). This follows from the observation that, for a concave (convex) function $\ps$, $\ps(s)\to 0$ as $s\to \pm\infty$ implies that $\ps'(s)\to 0$ as $s\to \pm \infty$, where $\ps'$ denotes the right-continuous version of the derivative of $\ps$. Therefore, $-\varphi'$ is the distribution (survival) function $\mu_0$ of a probability measure $\mu\in \PP_1(\R)$ with $\int_\R x\, {\rm d}\mu(x)=m$. 
  In particular, the partial orders $\leq_{\rm icv}$ and $\leq_{\rm icx}$ induce lattice structures on $\PP_1(\R)$ via
  \[
   \qquad \big(\mu\wedge_{\rm icv}\nu\big)(s):=\mu_{1,-}(s)\wedge\nu_{1,-}(s),\qquad \big(\mu\vee_{\rm icx}\nu\big)(s):=\mu_{1,+}(s)\vee\nu_{1,+}(s) 
  \]
  and
  \begin{align*}
   \qquad \qquad \big(\mu\vee_{\rm icv}\nu\big)(s)&:=\inf\{\lambda s+a\, |\, \lambda,a\in \R\text{ with }\la t+a\geq \mu_{1,-}(t)\vee\nu_{1,-}(t)\text{ for all }t\in \R \},\\
   \big(\mu\wedge_{\rm icx}\nu\big)(s)&:=\sup\{\lambda s+a\, |\, \lambda,a\in \R\text{ with }\la t+a\leq \mu_{1,+}(t)\wedge\nu_{1,+}(t)\text{ for all }t\in \R \}
  \end{align*}
  for all $s\in \R$. That is, the minimum (maximum) of two distributions is the poinwise minimum (maximum) of their integrated distribution (survival) functions and the maximum (minimum) of two distributions is the upper concave (lower convex) envelope of the pointwise maximum (minimum) of their integrated survival functions. Recall that, for a function $F\colon \R\to [0,\infty)$, its upper concave envelope $F^*$ and its lower convex envelope $F_*$ are given by
  \begin{align*}
   \qquad F^*(s)&:=\inf\{\lambda s+a\, |\, \lambda,a\in \R\text{ with }\la t+a\geq F(t)\text{ for all }t\in \R \}\quad \text{and}\\
   \qquad F_*(s)&:=\sup\{\lambda s+a\, |\, \lambda,a\in \R\text{ with }\la t+a\leq F(t)\text{ for all }t\in \R \}\quad \text{for }s\in \R.
  \end{align*}
  Since the convergence in the Wasserstein-$1$ topology implies the pointwise convergence of the integrated distribution (survival) functions (see e.g.~\cite[Theorem 6.9]{MR2459454}), the lattice operations $(\mu,\nu)\mapsto \mu \vee_{\rm icx} \nu $ and $(\mu,\nu)\mapsto \mu \wedge_{\rm icx} \nu $ are continuous $\mathcal P_1(\R)\times \mathcal P_1(\R)\to \mathcal P_1(\R)$. Moreover, this implies that the Wasserstein-$1$ topology is finer than both interval topologies, since closed intervals are closed w.r.t. the Wasserstein-$1$ metric.
  \item[b)] Let $(\mu^n)_{n\in \N}\in \mathcal P_1(\R)$ be a sequence, which is bounded above w.r.t.~$\leq_{\rm icv}$ ($\leq_{\rm icx}$), and let
  \[
   \nu^n:=\mu^1\vee_{\rm icv}\ldots \vee_{\rm icv} \mu^n\quad \big(\nu^n:=\mu^1\vee_{\rm icx}\ldots \vee_{\rm icx} \mu^n\big)\quad \text{for all }n\in \N.
  \]
  Then, the supremum of $(\mu^n)_{n\in \N}$ w.r.t. $\leq_{\rm icv}$ ($\leq_{\rm icx}$) exists, and its integrated distribution (survival) function is exactly the pointwise limit, as $n\to \infty$, of the integrated distribution (survival) functions of $(\nu^n)_{n\in \N}$.
  \item[c)] Let $(\mu^n)_{n\in \N}\in \mathcal P_1(\R)$ be a sequence, which is bounded below w.r.t.~$\leq_{\rm icv}$ ($\leq_{\rm icx}$), and let
  \[
   \nu^n:=\mu^1\wedge_{\rm icv}\ldots \wedge_{\rm icv} \mu^n \quad \big(\nu^n:=\mu^1\wedge_{\rm icx}\ldots \wedge_{\rm icx}\mu^n\big)\quad \text{for all }n\in \N.
  \]
 Then, the infimum of $(\mu^n)_{n\in \N}$ w.r.t. $\leq_{\rm icv}$ ($\leq_{\rm icx}$) exists, and its integrated distribution (survival) function is exactly the pointwise limit, as $n\to \infty$, of the integrated distribution (survival) functions of $(\nu^n)_{n\in \N}$.
 \end{enumerate}
\end{remark}

Before we state the first main result of this subsection, we would like briefly discuss some connections between uniform integrability, and boundedness w.r.t. $\leq_{\rm icv}$ and $\leq_{\rm icx}$.   
\begin{remark}\label{convsupwass}\
 \begin{enumerate}
  \item[a)] Let $K\subset \PP_1(\R)$ and $\underline\mu,\overline\mu\in \PP_1(\R)$ with $\underline\mu\leq_{\rm icv}\mu\leq_{\rm icx}\overline \mu$ for all $\mu\in K$. Then, the identity is u.i. for $K$. In fact,
  \[
  \qquad\sup_{\mu\in K} \int_{2s}^\infty x\, {\rm d}\mu (x)\leq \sup_{\mu\in K} 2\int_{s}^\infty x-s\, {\rm d}\mu (x)\leq 2\int_{s}^\infty x-s\, {\rm d}\overline\mu (x)\to 0\quad \text{as }s\to \infty
  \]
  and
  \[
   \qquad\quad\sup_{\mu\in K} \int_{-\infty}^{2s} -x\, {\rm d}\mu (x)\leq \sup_{\mu\in K} 2\int_{-\infty}^s s-x\, {\rm d}\mu (x)\leq 2\int_{-\infty}^s s-x\, {\rm d}\underline\mu (x)\to 0\quad \text{as }s\to -\infty.
  \]
  \item[b)] Let $(\mu^n)_{n\in \N}\subset \PP_1(\R)$ with $\underline\mu\leq_{\rm icv}\mu^n\leq_{\rm icx} \overline \mu$ and $\mu^n\leq_{\rm icv} \mu^{n+1}$ ($\mu^n\leq_{\rm icx} \mu^{n+1}$) for all $n\in \N$. Then, by part a), $(\mu^n)_{n\in \N}$ is u.i. Therefore, every subsequence of $(\mu^n)_{n\in \N}$ has a Wasserstein-$1$ convergent subsequence (cf. \cite[Theorem 6.9]{MR2459454}). Let $\mu\in \PP_1(\R)$ denote the least upper bound of $(\mu^n)_{n\in \N}$ w.r.t.~$\leq_{\rm icv}$ ($\leq_{\rm icx}$), $(\nu^n)_{n\in \N}$ be a convergent subsequence of $(\mu^n)_{n\in \N}$ and $\nu:=\lim_{n\to \infty}\nu^n$. Then, $\nu^n\leq_{\rm icv} \mu$ ($\nu^n\leq_{\rm icx}\mu$) and therefore, $\nu\leq_{\rm icv} \mu$ ($\nu\leq_{\rm icx} \mu$) by Remark \ref{minimaandmaxima2ndorder} a). On the other hand, for all $n\in \N$, there exists some $N\in \N$ with $\nu^k\geq_{\rm icv} \mu^n$ ($\nu^k\geq_{\rm icx} \mu^n$) for all $k\geq N$. Therefore, by Remark \ref{minimaandmaxima2ndorder} a), $\nu\geq_{\rm icv} \mu^n$ ($\nu\geq_{\rm icx} \mu^n$) for all $n\in \N$, i.e. $\nu\geq_{\rm icv}\mu$ ($\nu\geq_{\rm icx}\mu$). This shows that $\nu=\mu$. Recall that a sequence converges if and only if every subsequence has a convergent subsequence with the same limit. We have therefore shown that $(\mu^n)_{n\in \N}$ converges to its supremum w.r.t.~$\leq_{\rm icv}$ ($\leq_{\rm icx}$) in the Wasserstein-$1$ topology.
  \item[c)] Notice that the $\leq_{\rm icv}$-boundedness from below ($\leq_{\rm icx}$-boundedness from above) in part a) and b) cannot be replaced by $\leq_{\rm icx}$-boundedness from below ($\leq_{\rm icv}$-boundedness from above). Recall that every set $K\subset \PP_1(\R)$ with $\sup_{\mu\in K}\int_\R |x|\, {\rm d}\mu(x)<\infty$ is $\leq_{\rm icv}$-bounded from above ($\leq_{\rm icx}$-bounded from below). This follows from the fact that, by Jensen's inequality, $\mu\leq_{\rm icv} \de_a$ ($\de_a\leq_{\rm icx} \mu$) for all $a\in \R$ and $\mu\in \PP_1(\R)$ with $a\geq \int_{\R}x\, {\rm d}\mu(x)$ ($a\leq \int_{\R}x\, {\rm d}\mu(x)$). However, the uniform boundedness of first moments is not sufficient in order to achieve uniform integrability. We would further like to point out that $\underline\mu\leq_{\rm icv} \mu$ ($\mu\leq_{\rm icx} \overline\mu$) for all $\mu\in K$ and $\int_\R x\, {\rm d}\mu(x)=\int_\R x\, {\rm d}\nu(x)$ for all $\mu,\nu\in K$ implies $\mu\leq_{\rm icx} \underline\mu$ ($\overline\mu\leq_{\rm icv} \mu$) for $\mu\in K$ and thus the uniform integrability of the identity for $K$.
 \end{enumerate}
\end{remark}

We consider a $\sigma$-finite measure space $(S,\mathcal S,\pi)$.~We denote the Borel $\sigma$-algebra w.r.t.~the Wasserstein-$1$ topology by $\mathcal B\big(\PP_1(\R)\big)$ and the lattice of all equivalence classes of $\mathcal S$-$\mathcal B\big(\PP_1(\R)\big)$-measurable functions $S\to \mathcal P_1(\R)$ by $L^0\big(\PP_1(\R)\big)=L^0\big(S,\mathcal S,\pi;\PP_1(\R)\big)$. An arbitrary element $\mu$ of $L^0\big(\PP_1(\R)\big)$ will again be denoted in the form $\mu=(\mu_t)_{t\in S}$. On $L^0\big(\PP_1(\R)\big)$ we consider the order relations $\leq_{L_{\rm icv}^0}$ and $\leq_{L_{\rm icx}^0}$ given by $\mu \leq_{L_{\rm icv}^0} \nu$ if and only if $\mu_t \leq_{\rm icv} \nu_t$ for $\pi$-a.a.\ $t\in S$ and $\mu \leq_{L_{\rm icx}^0} \nu$ if and only if $\mu_t \leq_{\rm icx} \nu_t$ for $\pi$-a.a.\ $t\in S$, respectively.

\begin{theorem}\label{thm.completeness2}\
 \begin{enumerate}
  \item[a)] For every fixed $\nu\in L^0\big(\PP_1(\R)\big)$, the lattice $\big\{\mu\in L^0\big(\PP_1(\R)\big)\, \big|\, \mu\leq_{\rm icx}\nu\; (\mu\geq_{\rm icv}\nu)\big\}$ is Dedekind super complete (cf. Definition \ref{def.complete}).
  \item[b)] Let $M\subset L^0\big(\PP_1(\R)\big)$ be nonempty, $\leq_{L^0_{\rm icv}}$-bounded below and $\leq_{L^0_{\rm icx}}$-bounded above. If $M$ is directed upwards/downwards (cf. Definition \ref{def.complete}) w.r.t.~$\leq_{L^0_{\rm icv}}$ ($\leq_{L^0_{\rm icx}}$), then there exists a nondecreasing/nonincreasing sequence $(\mu^n)_{n\in \N}\subset M$ with $\mu^n\to \mu$ $\pi$-a.e.~in the Wasserstein-$1$ topology as $n\to \infty$, where $\mu\in L^0\big(\PP_1(\R)\big)$ is the supremum/infimum of $M$ w.r.t.~$\leq_{L^0_{\rm icv}}$ ($\leq_{L^0_{\rm icx}}$).
 \end{enumerate}
\end{theorem}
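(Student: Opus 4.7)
My plan is to mirror the proof of Theorem~\ref{thm.completeness} in the Wasserstein-$1$ setting, with Lemma~\ref{lem.auxres} again serving as the central tool. As there, I reduce to the case of finite $\pi$. For part (a), I construct, for each of the two sublattices, a bounded strictly increasing functional taking values in $\R$ and then appeal to Lemma~\ref{lem.auxres} together with Remark~\ref{rem.auxres}. For $\{\mu\mid\mu\leq_{L^0_{\rm icx}}\nu\}$ I take $h(x):=\log(1+e^x)$, which is strictly increasing, strictly convex and $1$-Lipschitz; for $\{\mu\mid\mu\geq_{L^0_{\rm icv}}\nu\}$ I take $h(x):=-\log(1+e^{-x})$, strictly increasing, strictly concave and $1$-Lipschitz. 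In both cases the Lipschitz property ensures that $\PP_1(\R)\ni\mu\mapsto\int h\,{\rm d}\mu$ is Wasserstein-$1$ continuous and hence Borel measurable, so
\[
F(\mu):=\int_S\arctan\!\Bigl(\int_\R h(x)\,{\rm d}\mu_t(x)\Bigr)\,{\rm d}\pi(t)
\]
defines a bounded real-valued functional regardless of the $\pi$-integrability of $t\mapsto\int h\,{\rm d}\nu_t$.

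Strict monotonicity of $F$ reduces to the pointwise fact that $\mu\leq_{\rm icx}\mu'$ with $\mu\neq\mu'$ forces $\int h\,{\rm d}\mu<\int h\,{\rm d}\mu'$, and analogously for $\leq_{\rm icv}$. This is a Strassen-plus-Jensen argument using the decomposition $\mu\leq_{\rm st}\lambda\leq_{\rm cx}\mu'$ (cf.~\cite[Section 4.A]{MR2265633}): equality of $\int h\,{\rm d}\cdot$ across all three measures forces $\mu=\lambda$ by strict monotonicity of $h$ along a monotone coupling, and $\lambda=\mu'$ by strict convexity of $h$ along a martingale coupling, contradicting $\mu\neq\mu'$. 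The $\leq_{\rm icv}$ case is symmetric via $\mu\leq_{\rm cv}\lambda\leq_{\rm st}\mu'$. Dedekind $\sigma$-completeness of each sublattice is inherited from Remark~\ref{minimaandmaxima2ndorder} b)/c), where countable $\vee_{\rm icx}$ and $\wedge_{\rm icv}$ are computed pointwise on the integrated survival/distribution functions and remain dominated by $\nu_{1,+}$ or $\nu_{1,-}$, respectively. Lemma~\ref{lem.auxres} then delivers part (a).

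For part (b), I treat the case where $M$ is directed upwards w.r.t.~$\leq_{L^0_{\rm icv}}$ with $\underline\mu\leq_{L^0_{\rm icv}}\mu\leq_{L^0_{\rm icx}}\overline\mu$ for every $\mu\in M$; the remaining three cases follow from the involution $\mu\mapsto\mu'$ together with \eqref{icxicv}. By Remark~\ref{convsupwass} a), $\sup_{\mu\in M}\int|x|\,{\rm d}\mu_t(x)<\infty$ for $\pi$-a.a.~$t$, and Remark~\ref{convsupwass} c) then supplies a $\leq_{L^0_{\rm icv}}$-upper bound for $M$ (for instance $t\mapsto\delta_{a(t)}$ with $a(t):=\int|x|\,{\rm d}\overline\mu_t+\int|x|\,{\rm d}\underline\mu_t$, which is $\mathcal S$-measurable in $t$). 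Applying part (a) to $\{\mu\geq_{L^0_{\rm icv}}\underline\mu\}$ yields a supremum $\mu^\ast$ of $M$ w.r.t.~$\leq_{L^0_{\rm icv}}$ together with a countable subfamily whose supremum is $\mu^\ast$; directedness then lets me inductively reshuffle this subfamily into a $\leq_{L^0_{\rm icv}}$-nondecreasing sequence $(\mu^n)_n\subset M$ with $\sup_n\mu^n=\mu^\ast$. For $\pi$-a.a.~$t$, the sequence $(\mu^n_t)_n$ is $\leq_{\rm icv}$-monotone and satisfies $\underline\mu_t\leq_{\rm icv}\mu^n_t\leq_{\rm icx}\overline\mu_t$, so Remark~\ref{convsupwass} b) upgrades the $\leq_{\rm icv}$-convergence to $\pi$-a.e.~Wasserstein-$1$ convergence to $\mu^\ast_t$.

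The main obstacle will be the simultaneous control of well-definedness, boundedness, and strict monotonicity of $F$: the Lipschitz nature of $h$ is needed for Borel measurability on $\PP_1(\R)$, the $\arctan$ envelope secures boundedness in $\R$ irrespective of the possibly non-$\pi$-integrable $t\mapsto\int h\,{\rm d}\nu_t$, and the strict convexity/concavity of $h$ is precisely what makes the Strassen--Jensen argument produce strict inequality from $\mu\neq\mu'$. A secondary subtlety is that the two-sided sandwich in part (b) is essential: one-sided $\leq_{\rm icx}$-boundedness alone does not prevent mass from escaping to $-\infty$, and Remark~\ref{convsupwass} b) genuinely requires both bounds in order to translate monotone order-convergence into Wasserstein-$1$ convergence.
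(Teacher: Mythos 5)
Your proof is correct and follows essentially the same route as the paper's: reduction to finite $\pi$, a strictly increasing real-valued functional built from a nondecreasing, strictly convex/concave, $1$-Lipschitz integrand (the paper uses the Gaussian-smoothed functions $\Phi^\pm$ where you use softplus, and cites \cite[Theorem 4.A.49]{MR2265633} where you rederive strict monotonicity via Strassen--Jensen), followed by Lemma \ref{lem.auxres}, Remark \ref{rem.auxres} and Remark \ref{convsupwass} b). Your additional $\arctan$ compactification is a harmless --- indeed helpful --- refinement that secures finiteness of $F$ without affecting strict monotonicity.
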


\begin{proof}
 Since every $\sigma$-finite measure can be transformed into a finite measure without changing the null-sets, we may w.l.o.g. assume that $\pi$ is finite. By Remark \ref{minimaandmaxima2ndorder} and Remark \ref{convsupwass}, the set $\big\{\mu\in L^0\big(\PP_1(\R)\big)\, \big|\, \mu\leq_{L^0_{\rm icx}}\nu\; (\mu\geq_{L^0_{\rm icv}}\nu)\big\}$ together with $\leq_{L^0_{\rm icv}}$ ($\leq_{L^0_{\rm icx}}$) is Dedekind $\si$-complete. Let $\Phi\colon \R \to \R$ be defined by
 \[
 \Phi^\pm (x):=\frac{1}{\sqrt{2\pi}}\int_\R (x-s)^\pm e^{-s^2/2}\, {\rm d}s \quad \text{for all }x\in \R.                                                                                                                                                                                                                                                                                                                                                                                                                                                                                                                                                            \]
 The map $S\to \R, \; t\mapsto \int_\R\Phi^\pm (x)\, {\rm d}\mu_t(x)$ is $\mathcal S$-$\BB(\R)$-measurable for every $\mu\in L^0\big(\PP_1(\R)\big)$, since $\Phi^\pm$ is $1$-Lipschitz and thus induces a continuous (w.r.t.~the Wasserstein-$1$ topology) linear functional $\mathcal P_1(\R)\to \R$. Hence, $$F^\pm \colon L^0\big(\PP_1(\R)\big)\to \R,\quad \mu\mapsto \int_\R\Phi^\pm(x)\, {\rm d}\mu_t(x)\,{\rm d}\pi(t)$$ is well-defined. Moreover, $F^-$ ($F^+$) is strictly increasing w.r.t.~$\leq_{\rm icv}$ ($\leq_{\rm icx}$), cf. Definition \ref{def:increasing}, since $\Phi$ is nondecreasing and strictly concave (convex), see e.g.~\cite[Theorem 4.A.49]{MR2265633}. The assertions now follow from Lemma \ref{lem.auxres}, Remark \ref{rem.auxres} and Remark \ref{convsupwass} b).
\end{proof}

Again, in the case where $S$ is a singleton and $\pi(S)>0$, we obtain the following proposition.

\begin{proposition}\label{cor.completeness2}\
 \begin{enumerate}
  \item[a)] The lattice $\PP_1(\R)$, endowed with the lattice order $\leq_{\rm icv}$ ($\leq_{\rm icx}$), is Dedekind super complete.
  \item[b)] Let $M\subset \PP_1(\R)$ be nonempty, $\leq_{\rm icv}$-bounded below and $\leq_{\rm icx}$-bounded above. If $M$ is directed upwards/downwards (cf. Definition \ref{def.complete}) w.r.t.~$\leq_{\rm icv}$ ($\leq_{\rm icx}$), then there exists a nondecreasing/nonincreasing sequence $(\mu^n)_{n\in \N}\subset M$ with $\mu^n\to \mu$ in the Wasserstein-$1$ topology as $n\to \infty$, where $\mu\in \PP_1(\R)$ is the supremum/infimum of $M$ w.r.t.~$\leq_{\rm icv}$ ($\leq_{\rm icx}$).
 \end{enumerate}
\end{proposition}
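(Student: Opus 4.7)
The plan is to deduce the proposition directly from Theorem~\ref{thm.completeness2} by specializing the measure space $(S,\mathcal{S},\pi)$ to a singleton $S=\{\ast\}$ with $\pi(\{\ast\})>0$. Under this choice, the lattice $L^0(S,\mathcal{S},\pi;\PP_1(\R))$ is canonically isomorphic as an ordered space to $\PP_1(\R)$ (endowed with $\leq_{\rm icv}$, respectively $\leq_{\rm icx}$) via the evaluation $\mu\mapsto \mu_\ast$, and $\pi$-a.e.\ convergence in the Wasserstein-$1$ topology trivially reduces to ordinary Wasserstein-$1$ convergence. With these identifications in hand, part (b) of the proposition is nothing more than the specialized statement of Theorem~\ref{thm.completeness2} (b), so no further work is needed there.

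For part (a) with $\leq_{\rm icv}$ (the case of $\leq_{\rm icx}$ being entirely symmetric), let $K\subset\PP_1(\R)$ be a nonempty subset which is bounded w.r.t.~$\leq_{\rm icv}$, with lower bound $\underline{\mu}$ and upper bound $\overline{\mu}$. I would apply Theorem~\ref{thm.completeness2} (a) with $\nu=\underline{\mu}$: the sublattice $L:=\{\mu\in\PP_1(\R)\,|\,\mu\geq_{\rm icv}\underline{\mu}\}$ contains $K$ and is itself Dedekind super complete. Since $\overline{\mu}\geq_{\rm icv}$ every element of $K$, one has $\overline{\mu}\in L$, so $K$ is bounded above inside $L$. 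Dedekind super completeness of $L$ then yields a countable subset $K_0\subset K$ whose supremum and infimum in $L$ coincide with those of $K$ computed in $L$.

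The one remaining point is to verify that these extrema, computed inside $L$, agree with the corresponding extrema in the ambient lattice $\PP_1(\R)$. This is routine: $\sup_{\PP_1(\R)} K_0$ dominates every $\mu\in K_0\subset L$ w.r.t.~$\leq_{\rm icv}$ and hence also $\underline{\mu}$, so it lies in $L$ and must coincide with $\sup_L K_0$. The analogous assertion for infima holds because any lower bound of $K_0$ in $\PP_1(\R)$ is $\geq_{\rm icv}\underline{\mu}$ (as $\underline{\mu}$ is itself such a lower bound), placing $\inf_{\PP_1(\R)} K_0$ inside $L$ and identifying it with $\inf_L K_0$. The $\leq_{\rm icx}$ variant is handled in the same way, using the upper bound $\overline{\mu}$ and the sublattice $\{\mu\in\PP_1(\R)\,|\,\mu\leq_{\rm icx}\overline{\mu}\}$. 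No genuine obstacle arises beyond this short translation step; the substantive work has already been carried out in Theorem~\ref{thm.completeness2}.
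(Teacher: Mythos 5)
Your treatment of part b) is fine: it is indeed the singleton specialization of Theorem \ref{thm.completeness2} b), and the identification of $L^0(\{\ast\},\mathcal S,\pi;\PP_1(\R))$ with $\PP_1(\R)$ together with the reduction of $\pi$-a.e.\ convergence to ordinary Wasserstein-$1$ convergence is exactly the intended translation. The problem is in part a). First, you have inverted the pairing in Theorem \ref{thm.completeness2} a): following the paper's parenthetical convention, the sublattice $\{\mu\,|\,\mu\leq_{\rm icx}\nu\}$ is the one that is Dedekind super complete for the order $\leq_{\rm icv}$, while $\{\mu\,|\,\mu\geq_{\rm icv}\nu\}$ goes with the order $\leq_{\rm icx}$. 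Your set $L=\{\mu\,|\,\mu\geq_{\rm icv}\underline\mu\}$ equipped with $\leq_{\rm icv}$ is not an instance of the theorem. Second, and more substantively, even with the correct reading the reduction cannot work: to place a $\leq_{\rm icv}$-bounded set $K$ inside a sublattice covered by Theorem \ref{thm.completeness2} a) you would need a $\leq_{\rm icx}$-upper bound for $K$, and $\leq_{\rm icv}$-boundedness from above does not provide one (cf.\ Remark \ref{convsupwass} c); the set $\mu_n=(1-\tfrac1n)\delta_0+\tfrac1n\delta_n$ is $\leq_{\rm icv}$-bounded between $\delta_0$ and $\delta_1$ but not uniformly integrable, hence by Lemma \ref{equivui} not $\leq_{\rm icx}$-bounded above). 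So Proposition \ref{cor.completeness2} a) is genuinely stronger than the singleton specialization of Theorem \ref{thm.completeness2} a), and your argument leaves it unproved.

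The fix is the paper's route, which bypasses the sublattice restriction entirely. The restriction in Theorem \ref{thm.completeness2} a) exists only to secure measurability of countable suprema in the $L^0$ setting (via uniform integrability and Wasserstein-$1$ convergence); it is irrelevant when $S$ is a singleton. One first checks, via Remark \ref{minimaandmaxima2ndorder} b) and c), that the full lattice $\PP_1(\R)$ with $\leq_{\rm icv}$ (resp.\ $\leq_{\rm icx}$) is Dedekind $\sigma$-complete: the integrated distribution (survival) functions of the finite suprema/infima converge pointwise to a function that is squeezed between those of the first element and of the bound, and hence again corresponds to an element of $\PP_1(\R)$. Then Lemma \ref{lem.auxres} applies directly with the strictly increasing functional $F^-(\mu)=\int_\R\Phi^-(x)\,{\rm d}\mu(x)$ (resp.\ $F^+$), which is defined on all of $\PP_1(\R)$ because $\Phi^\pm$ is $1$-Lipschitz; no topological convergence, and hence no $\leq_{\rm icx}$-bound, is needed at this stage. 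Part b) then follows from Remark \ref{rem.auxres} combined with Remark \ref{convsupwass} b), which is where the joint $\leq_{\rm icv}$/$\leq_{\rm icx}$ boundedness hypothesis of part b) is actually used.
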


\begin{proof}
 By Remark \ref{minimaandmaxima2ndorder} b) and c), $\PP_1(\R)$ together with $\leq_{\rm icv}$ ($\leq_{\rm icx}$) is Dedekind $\sigma$-complete. The assertions now follow from the same arguments that were employed in the proof of Theorem \ref{thm.completeness2}.
\end{proof}

\begin{remark}
 For fixed $m\in \R$, let $\PP_{1,m}(\R)$ denote the set of all $\mu\in \PP_1(\R)$ with $\int_\R x\, {\rm d}\mu(x)=m$. Then, $\nu\leq_{\rm icv}\mu$ if and only if $\mu\leq_{\rm icx}\nu$ if and only if $\mu\leq_{\rm cx}\nu$ for all $\mu,\nu\in \PP_{1,m}(\R)$. By Theorem \ref{thm.completeness2}, the lattices $L^0\big(\PP_{1,m}(\R)\big)$, defined in a similar way as $L^0\big(\PP_1(\R)\big)$ and equipped with the order $\leq_{L^0_{\rm cx}}:=\leq_{L^0_{\rm icx}}$, as well as $\PP_{1,m}(\R)$ together with the order relation $\leq_{\rm cx}$ are then Dedekind super complete. Moreover, the supremum/infimum of any bounded set $M$, which is directed upwards/downwards can be approximated in the ($\pi$-a.e.)~Wasserstein-$1$ sense by a nondecreasing/nonincreasing sequence in $M$.
\end{remark}

We now turn our focus on characterizing the complete sublattices of $\PP_1(\R)$. We start with the following lemma, which is a combination and generalization of \cite[Theorem 1]{MR2998767} and Lemma \ref{corui1}. It complements the characterizations of uniform integrability from Section \ref{sec.tightness} by two more equivalences.


\begin{lemma}\label{equivui}
 Let $K\subset \mathcal P_1(\R)$. Then, the following statements are equivalent:
 \begin{enumerate}
  \item[(i)] The identity is u.i.~for $K$, i.e.~$\sup_{\mu\in K}\int_\R 1_{[-s,s]^c}(x) |x|\, {\rm d}\mu(x)\to 0$ as $s\to \infty$.
  \item[(ii)] There exist $\underline\mu,\overline\mu\in \mathcal P_1(\R)$ with $\underline\mu\leq_{\rm icv} \mu\leq_{\rm icx}\overline\mu$ for all $\mu\in K$.
  \item[(iii)] Every nonempty subset of $K$ has a least upper bound and a greatest lower bound w.r.t. $\leq_{\rm icv}$ and $\leq_{\rm icx}$.
  \item[(iv)] There exists a nondecreasing function $\psi\colon [0,\infty)\to [0,\infty)$ with $\psi(0)=0$, $\frac{\psi(s)}{s}\to \infty$ as $s\to \infty$ and
 \[
  \sup_{\mu\in K}\int_\R \psi(|x|)\,{\rm d}\mu(x)<\infty.
 \]
 \end{enumerate}
 In this case, for $\underline\mu=\inf_{\rm icv} K$ and $\overline\mu=\sup_{\rm icx} K$,
 \begin{equation}\label{minmaxexp}
  \int_\R x\, {\rm d}\underline\mu (x)=\inf_{\mu\in K} \int_\R x\, {\rm d}\mu(x)\quad \text{and}\quad \int_\R x\, {\rm d}\overline \mu (x)=\sup_{\mu\in K} \int_\R x\, {\rm d}\mu(x).
 \end{equation}
 Moreover, the function $\psi$ in (iv) can be chosen to be continuously differentiable, strictly convex and u.i. for $K$. 
\end{lemma}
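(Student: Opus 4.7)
The plan is to establish the equivalences as a cycle built on earlier results in the paper. The equivalence (i)$\Leftrightarrow$(iv), together with the final regularity assertion on $\psi$ (continuously differentiable, strictly convex and u.i.~for $K$), is an immediate consequence of Corollary \ref{corui2} applied to any family of random variables whose distributions exhaust $K$. The implication (ii)$\Rightarrow$(i) is exactly the content of Remark \ref{convsupwass} a). Finally, (iii)$\Rightarrow$(ii) is immediate: under (iii), taking $M=K$ itself produces $\underline\mu := \inf_{\rm icv} K$ and $\overline\mu := \sup_{\rm icx} K$ in $\PP_1(\R)$, which provide the required bounds.

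The core of the proof is (i)$\Rightarrow$(ii), which simultaneously delivers \eqref{minmaxexp}. The natural candidates are the pointwise extrema of the integrated functions,
\[
  \underline\mu_{1,-}(s) := \inf_{\mu\in K}\mu_{1,-}(s), \qquad \overline\mu_{1,+}(s) := \sup_{\mu\in K}\mu_{1,+}(s) \quad \text{for } s\in \R.
\]
By Remark \ref{minimaandmaxima2ndorder} a), the pointwise infimum of nonincreasing concave functions is again nonincreasing and concave, and the pointwise supremum of nonincreasing convex functions is nonincreasing and convex; what remains is to check the boundary behavior at $\pm\infty$ required to identify these functions with elements of $\PP_1(\R)$. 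For $s<0$, the elementary bound $(s-x)^+\le |x|\,\ch_{\{|x|>|s|\}}$ yields $\sup_{\mu\in K}|\mu_{1,-}(s)| = \sup_{\mu\in K}\int(s-x)^+\,{\rm d}\mu(x) \to 0$ as $s\to-\infty$, hence $\underline\mu_{1,-}(s)\to 0$; the symmetric estimate $(x-s)^+\le |x|\,\ch_{\{|x|>s\}}$ for $s>0$ gives $\overline\mu_{1,+}(s)\to 0$ as $s\to\infty$. Inserting the identity $\mu_{1,+}(s)+\mu_{1,-}(s)=m_\mu-s$ (with $m_\mu:=\int_\R x\,{\rm d}\mu(x)$) into the inf/sup and using that the "error" terms vanish uniformly in $\mu$ by the same tail bounds, one obtains $m_{\underline\mu}=\inf_{\mu\in K}m_\mu$ and $m_{\overline\mu}=\sup_{\mu\in K}m_\mu$, both finite under (i); this yields \eqref{minmaxexp}. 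The extremality $\underline\mu=\inf_{\rm icv}K$ and $\overline\mu=\sup_{\rm icx}K$ is immediate from the pointwise definitions.

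It remains to close the cycle with (ii)$\Rightarrow$(iii). From (ii), the already-established chain (ii)$\Rightarrow$(i)$\Rightarrow$(ii) shows that $\sup_{\mu\in K}m_\mu$ and $\inf_{\mu\in K}m_\mu$ are finite, with extreme values attained by $\overline\mu$ and $\underline\mu$. Jensen's inequality then gives $\mu\leq_{\rm icv}\de_{m_{\overline\mu}}$ and $\de_{m_{\underline\mu}}\leq_{\rm icx}\mu$ for every $\mu\in K$, so that $K$ is bounded both above and below with respect to each of the two orders. Proposition \ref{cor.completeness2} a) (the Dedekind super completeness of $\PP_1(\R)$) then guarantees that every nonempty $M\subset K$ has a least upper bound and a greatest lower bound w.r.t.~both $\leq_{\rm icv}$ and $\leq_{\rm icx}$ in $\PP_1(\R)$. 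The main obstacle in the whole argument is the asymptotic analysis in (i)$\Rightarrow$(ii): once the tail estimates are in place and combined with the identity $\mu_{1,+}+\mu_{1,-}=m_\mu-s$, everything reduces to verifying that sup/inf can be interchanged with the limits $s\to\pm\infty$, which is justified precisely by the uniform vanishing of the tail contributions coming from uniform integrability.
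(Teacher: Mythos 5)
Your proof is correct and follows essentially the same route as the paper: the equivalences (i)$\Leftrightarrow$(iv), (ii)$\Leftrightarrow$(iii) and (ii)$\Rightarrow$(i) are delegated to the same earlier results, and (i)$\Rightarrow$(ii) is established with the identical candidates $\underline\mu_{1,-}=\inf_{\mu\in K}\mu_{1,-}$ and $\overline\mu_{1,+}=\sup_{\mu\in K}\mu_{1,+}$, whose boundary behaviour yields \eqref{minmaxexp}. Your explicit tail estimates and the Jensen argument making (ii)$\Rightarrow$(iii) precise are just slightly more detailed versions of steps the paper leaves implicit.
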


\begin{proof}
 The equivalence of (i) and (iv) has been discussed in Section \ref{sec.tightness}. By Proposition \ref{cor.completeness2}, (ii) and (iii) are equivalent. Moreover, (ii) implies (i) by Remark \ref{convsupwass}. It remains to show that (i) implies (ii). Assume that $K$ is u.i., and define $\underline\mu_{1,-},\overline\mu_{1,+}\colon \mathbb{R} \rightarrow [0,\infty)$ by 
 \[
  \underline\mu_{1,-}(s):=\inf_{\mu\in K}\mu_{1,-}(s)\quad \text{and}\quad \overline\mu_{1,+}(s):=\sup_{\mu\in K}\mu_{1,+}(s)
 \]
 for $s\in \R$. Then, $\underline \mu_{1,-}$ is nonincreasing and concave, and $\overline\mu_{1,+}$ is nonincreasing and convex. Since the identity is u.i.~for $K$, it follows that
 \[
  \lim_{s\to \infty} \underline\mu_{1,-}(s)+s=\inf_{\mu\in K} \int_\R x\, {\rm d}\mu(x)\quad \text{and}\quad \lim_{s\to -\infty} \overline\mu_{1,+}(s)+s=\sup_{\mu\in K} \int_\R x\, {\rm d}\mu(x).
 \]
 Moreover, $\lim_{s\to -\infty} \underline\mu_{1,-}(s)=0$ and $\lim_{s\to \infty} \overline\mu_{1,+}(s)=0$, which shows that $\underline\mu_{1,-}$ and $\overline\mu_{1,+}$ give rise to two probability measures $\underline\mu,\overline\mu\in \mathcal P_1(\R)$ with \eqref{minmaxexp}. By definition of $\underline\mu$ and $\overline\mu$, it follows that $\underline\mu=\inf_{\rm icv} K$ and $\overline\mu=\sup_{\rm icx} K$.
\end{proof}

 Notice that the previous lemma is quite interesting in view of Lemma \ref{equivtight}. A combination of these two results yields that uniform integrability of the identity implies tightness and thus $\leq_{\rm st}$-boundedness, which in turn implies $\leq_{\rm icv}$-boundedness and $\leq_{\rm icx}$-boundedness and thus uniform integrability. At first glance, this gives the impression that tightness is equivalent to uniform integrability of the identity and thus seems to be a contradiction. However, the $\leq_{\rm st}$-bounds of tight sets, for which the identity is not uniformly integrable, do not lie in $\PP_1(\R)$, and thus the two lemmas are not contradictory. We are now ready to state the second main result of this subsection.

\begin{theorem}\label{complattice2order}
 Let $K$ be a sublattice of $\mathcal P_1(\R)$. Then, the following statements are equivalent:
 \begin{enumerate}
  \item[(i)] $K$ is compact in the Wasserstein-$1$ topology,
  \item[(ii)] $K$ and $K':=\{\mu'\, |\, \mu\in K\}$ are complete w.r.t.~the lattice order $\leq_{\rm icv}$ ($\leq_{\rm icx}$),
  \item[(iii)] $K$ is complete w.r.t.~the lattice orders $\leq_{\rm icv}$ and $\leq_{\rm icx}$.
 \end{enumerate}
\end{theorem}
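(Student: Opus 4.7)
The plan is to establish the cycle (iii) $\Rightarrow$ (ii) $\Rightarrow$ (i) $\Rightarrow$ (iii). For (i) $\Rightarrow$ (iii), I would proceed as in Theorem \ref{complattice1order}: by Remark \ref{minimaandmaxima2ndorder} a), the Wasserstein-$1$ topology is finer than both the $\leq_{\rm icv}$- and $\leq_{\rm icx}$-interval topologies, so Wasserstein-$1$ compactness of $K$ implies compactness in both interval topologies, yielding completeness w.r.t.~both orders by the Birkhoff--Frink theorem. For (iii) $\Rightarrow$ (ii), the duality \eqref{icxicv} gives an order-reversing bijection $\mu \mapsto \mu'$ between $(\PP_1(\R), \leq_{\rm icv})$ and $(\PP_1(\R), \leq_{\rm icx})$ swapping suprema and infima, so completeness of $K$ w.r.t.~$\leq_{\rm icx}$ transfers to completeness of $K'$ w.r.t.~$\leq_{\rm icv}$; by the same duality, (ii) also implies (iii), which I use freely in what follows.

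For (ii) $\Rightarrow$ (i), I first invoke (iii) to obtain that $K$ is $\leq_{\rm icv}$-bounded from below and $\leq_{\rm icx}$-bounded from above. By Remark \ref{convsupwass} a) (equivalently, Lemma \ref{equivui}) the identity is uniformly integrable for $K$, whence $K$ is relatively compact in Wasserstein-$1$ by \cite[Theorem 6.9]{MR2459454}.

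For Wasserstein-$1$ closedness, let $(\mu^n) \subset K$ converge to $\mu$ in Wasserstein-$1$; I aim to show $\mu \in K$. Since integration against the $1$-Lipschitz map $x \mapsto (x-s)^-$ is Wasserstein-$1$ continuous, one has the pointwise convergence $\mu^k_{1,-}(s) \to \mu_{1,-}(s)$. In analogy with the iterated inf-sup construction used in the proof of Theorem \ref{complattice1order}, I would exhibit $\mu$ as an iterated $K$-infimum and $K$-supremum w.r.t.~$\leq_{\rm icv}$ of the tails $\{\mu^k : k \geq n\}$: set $\nu^n := \inf_K\{\mu^k : k \geq n\}$ (which exists by completeness) and, after verifying via Remark \ref{minimaandmaxima2ndorder} c) and Proposition \ref{cor.completeness2} b) that $(\nu^n)$ is nondecreasing with the monotone sandwich forcing $\nu^n_{1,-}(s) \to \mu_{1,-}(s)$ pointwise, identify $\mu$ with $\sup_K\{\nu^n\} \in K$.

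The main obstacle I expect is precisely this closing identification: confirming that the $K$-supremum of $(\nu^n)$ really equals $\mu$ (rather than a strictly larger element of $K$ that still majorises all $\nu^n$), which is the same delicate point as in the first-order proof and requires reconciling the $K$-lattice operations with the pointwise ones computed in $\PP_1(\R)$. Once closedness is in place, it combines with the already-established relative compactness to yield Wasserstein-$1$ compactness of $K$.
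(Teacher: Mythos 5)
Your route is the paper's route: (i)$\Rightarrow$(iii) via the observation that the Wasserstein-$1$ topology refines both interval topologies, (ii)$\Leftrightarrow$(iii) via the reflection \eqref{icxicv}, and for the converse direction $\leq_{\rm icv}$/$\leq_{\rm icx}$-boundedness $\Rightarrow$ uniform integrability (Remark \ref{convsupwass} a)) $\Rightarrow$ relative compactness by \cite[Theorem 6.9]{MR2459454}, together with Wasserstein-$1$ closedness obtained by writing the limit as an iterated infimum--supremum of the tails. All of this matches the paper's proof step by step. The only thing separating your write-up from a complete proof is the identification you explicitly leave open, namely that $\sup_n \nu^n$ with $\nu^n=\inf\{\mu^k : k\geq n\}$ is the Wasserstein limit $\mu$ and lies in $K$.

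Your worry there is legitimate, and the resolution hinges on how ``complete'' is read for a sublattice. If least upper bounds and greatest lower bounds were computed \emph{intrinsically} in $K$, the identification can genuinely fail: for instance $K=\{\de_0\}\cup\{\de_t\,|\, t\in(\tfrac12,1]\}$ is a chain that is intrinsically complete for both $\leq_{\rm icv}$ and $\leq_{\rm icx}$ (the tail $\{\de_{1/2+1/n}\}$ has greatest lower bound $\de_0$ \emph{within} $K$), yet it is not Wasserstein-$1$ closed. The reading the paper uses --- consistent with condition (iii) of Lemma \ref{equivui} and Lemma \ref{equivtight}, whose suprema and infima are those of the ambient Dedekind (super) complete lattice --- is that the lattice operations of $\PP_1(\R)$ applied to nonempty subsets of $K$ exist and land in $K$. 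Under that reading the obstacle dissolves: by Remark \ref{minimaandmaxima2ndorder} a)--c), the countable $\leq_{\rm icv}$-infimum of $\{\mu^k: k\geq n\}$ taken in $\PP_1(\R)$ has integrated distribution function $\inf_{k\geq n}\mu^k_{1,-}$ pointwise, the resulting sequence is nondecreasing, and its $\PP_1(\R)$-supremum has integrated distribution function $\lim_n\inf_{k\geq n}\mu^k_{1,-}=\mu_{1,-}$ by the pointwise convergence you already established; completeness then places each $\nu^n$ and their supremum in $K$, so $\mu\in K$. This is exactly the paper's one line $\mu_{1,-}=\sup_{n}\inf_{k\geq n}\mu^k_{1,-}\in K$. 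If one insists on the purely intrinsic notion of completeness, the theorem requires the additional (and here implicit) hypothesis that $K$-suprema and $K$-infima agree with the ambient ones.
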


\begin{proof}
 If $K=\emptyset$, the statement is trivial. Therefore, we assume that $K$ is nonempty. Since the Wasserstein-$1$ topology is finer than the interval topologies of $\leq_{\rm icv}$ and $\leq_{\rm icx}$, it follows that every Wasserstein-$1$ compact subset of $\mathcal P_1(\R)$ is compact in both interval topologies and thus complete w.r.t.~$\leq_{\rm icv}$ and $\leq_{\rm icx}$. On the other hand, if $K$ is compact in the interval topologies of~$\leq_{\rm icv}$ ($\leq_{\rm icx}$), it is closed in the Wasserstein-$1$ topology. In fact, let $(\mu^n)_{n\in \N}\subset \mathcal P_1(\R)$ with $\mu^n\to \mu\in \mathcal P_1(\R)$ as $n\to \infty$ w.r.t.~the Wasserstein-$1$ metric. Then,
 $$\mu_{1,-}=\sup_{n\in \N}\inf_{k\geq n} \mu_{1,-}^k\in K\qquad \Big(\mu_{1,+}=\inf_{n\in \N}\sup_{k\geq n} \mu_{1,+}^k\in K\Big),$$
 which shows that $\mu\in K$, since $K$ is complete. Here, we used the fact that the Wasserstein-$1$ convergence implies the pointwise convergence of the integrated distribution (survival) functions. This shows that $K$ is Wasserstein-$1$ closed. Moreover, $K$ is $\leq_{\rm icv}$-bounded and $\leq_{\rm icx}$-bounded as it is complete w.r.t.~both partial orders. By Lemma \ref{equivui} (or Remark \ref{convsupwass} a)), it follows that $K$ is u.i., and therefore Wasserstein-$1$ compact, as it is closed in the Wasserstein-$1$ topology (cf. \cite[Theorem 6.9]{MR2459454}). We have thus shown that (i) and (iii) are equivalent. The equivalence of (ii) and (iii) follows from \eqref{icxicv}.
\end{proof}

\begin{corollary}\label{cor.complattice2order}
 Let $K$ be a sublattice of $\mathcal P_1(\R)$ with $\int_\R x\, {\rm d}\mu(x)=\int_\R x \, {\rm d}\nu(x)$ for all $\mu,\nu\in \R$. Then, $K$ is complete w.r.t.~$\leq_{\rm cx}$ if and only if $K$ is compact in the Wasserstein-$1$ topology.
\end{corollary}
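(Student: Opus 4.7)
My plan is to reduce the claim to Theorem \ref{complattice2order}. The starting observation, already exploited in the remark preceding this corollary, is that on each fiber $\PP_{1,m}(\R) := \{\mu \in \PP_1(\R) \, |\, \int_\R x\, {\rm d}\mu(x) = m\}$ the orders $\leq_{\rm cx}$ and $\leq_{\rm icx}$ coincide, and $\leq_{\rm cx}$ is the reverse of $\leq_{\rm icv}$. Since all elements of $K$ share a common expectation, $K \subset \PP_{1,m}(\R)$ for a single $m \in \R$.

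I would next upgrade this order identification to the lattice-operation level by checking that the $\leq_{\rm icx}$- and $\leq_{\rm icv}$-lattice operations computed in $\PP_1(\R)$ map pairs in $\PP_{1,m}(\R)$ into $\PP_{1,m}(\R)$. For the $\leq_{\rm icx}$-supremum of $\mu,\nu \in \PP_{1,m}(\R)$ this is immediate from Remark \ref{minimaandmaxima2ndorder}: its integrated survival function is $\mu_{1,+} \vee \nu_{1,+}$, and the asymptotic identities $\mu_{1,+}(s)+s \to m$ and $\nu_{1,+}(s)+s \to m$ as $s \to -\infty$ transfer to the pointwise maximum. For the $\leq_{\rm icx}$-infimum, whose integrated survival function is the lower convex envelope $\psi$ of $\mu_{1,+} \wedge \nu_{1,+}$, I would argue the same asymptote: the map $s \mapsto \mu_{1,+}(s) + s$ is nondecreasing (its derivative equals $1 - \mu_0(s) \geq 0$) with limit $m$ at $-\infty$, hence bounded below by $m$; therefore for every $a < m$ the affine function $s \mapsto -s + a$ is a minorant of both $\mu_{1,+}$ and $\nu_{1,+}$ and so lies below $\psi$. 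This yields $\psi(s)+s \geq a$ for all $s$ and, combined with the trivial bound $\psi \leq \mu_{1,+}$, shows $\psi(s)+s \to m$ as $s \to -\infty$, i.e.~the infimum lies in $\PP_{1,m}(\R)$. Symmetric arguments handle the lattice operations for $\leq_{\rm icv}$.

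Once these lattice operations are known to preserve $\PP_{1,m}(\R)$, the sublattice and completeness properties of $K$ with respect to $\leq_{\rm cx}$ translate into the simultaneous sublattice and completeness properties with respect to both $\leq_{\rm icx}$ and $\leq_{\rm icv}$; Theorem \ref{complattice2order} then delivers the equivalence with Wasserstein-$1$ compactness. The main obstacle I expect is precisely the mean preservation of the $\leq_{\rm icx}$-infimum (and dually the $\leq_{\rm icv}$-supremum): while the supremum case is immediate from pointwise asymptotics, the infimum requires the minorant argument above to control the lower convex envelope at $-\infty$.
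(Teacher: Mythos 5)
Your proposal is correct and follows essentially the same route as the paper: reduce to Theorem \ref{complattice2order} by observing that on the constant-mean fiber $\PP_{1,m}(\R)$ the order $\leq_{\rm cx}$ coincides with $\leq_{\rm icx}$ and is the reverse of $\leq_{\rm icv}$, so $\leq_{\rm cx}$-completeness is equivalent to completeness w.r.t.\ both orders. The paper's one-line proof delegates the needed bookkeeping to Remark \ref{convsupwass} c) (conversion of $\leq_{\rm icv}$-bounds into $\leq_{\rm icx}$-bounds on constant-mean sets), whereas you verify directly that the $\leq_{\rm icx}$/$\leq_{\rm icv}$ lattice operations preserve $\PP_{1,m}(\R)$ via the affine-minorant control of the convex envelope; your computation is sound and in fact makes explicit a point the paper leaves implicit.
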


 The proof is is a direct consequence of Theorem \ref{complattice2order} together with Remark \ref{convsupwass} c).


\appendix

\section{An auxiliary result}\label{append.auxres}

In this section, we prove an auxiliary result that helps to determine when a Dedekind $\sigma$-complete lattice is super Dedekind complete. We start with the following definitions:

\begin{definition}\label{def.complete}
 Let $L$ be a lattice, i.e.~a partially ordered set (poset) in which every finite nonempty subset has a least upper bound and a greatest lower bound.
 \begin{enumerate}
  \item[a)] We say that $L$ is \textit{Dedekind $\si$-complete} if every countable nonempty subset, that is bounded above or below, has a least upper bound or a greatest lower bound, respectively. We say that $L$ is \textit{Dedekind complete} if every nonempty subset, that is bounded above or below, has a least upper bound or a greatest lower bound, respectively. We say that $L$ is \textit{Dedekind super complete} if every nonempty subset, that is bounded above or below, has a countable subset with the same least upper bound or greatest lower bound, respectively. We say that $L$ is \textit{complete} if every nonempty subset of $L$ has a least upper bound and a greatest lower bound.
  \item[b)] We say that a set $M\subset L$ is \textit{directed upwards} or \textit{directed downwards} if, for all $x,y\in M$, there exists some $z\in M$ with $x\vee y\leq z$ or $x\wedge y\geq z$, respectively.
 \end{enumerate}
\end{definition}

\begin{definition}\label{def:increasing}
 Let $L$ and $R$ be two posets. We say that a map $F\colon L\to R$ is \textit{strictly increasing} if
 \begin{enumerate}
  \item[(i)] $F(x)\leq F(y)$ for all $x,y\in L$ with $x\leq y$,
  \item[(ii)] for all $x,y\in L$ with $x\leq y$ and $F(x)=F(y)$, it follows that $x=y$.
 \end{enumerate}
\end{definition}

The following lemma gives a sufficient and neccessary condition for a Dedekind $\si$-complete lattice to be Dedekind super complete. The proof is a generalized version of the existence proof of the essential supremum for families of real-valued random variables (see e.g. F\"ollmer-Schied \cite[Theorem A.32]{MR2169807}).

\begin{lemma}\label{lem.auxres}

   Let $L$ be a Dedekind $\sigma$-complete lattice. Then, $L$ is Dedekind super complete if and only if there exists a strictly increasing map $F\colon L\to R$ for some Dedekind super complete lattice $R$.
 
\end{lemma}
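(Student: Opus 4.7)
The ``only if'' direction is immediate by taking $R = L$ and $F = \id_L$, which is trivially strictly increasing. For the ``if'' direction, I would adapt the classical proof of existence of the essential supremum for families of random variables (cf.~\cite[Theorem A.32]{MR2169807}), using the strictly increasing map $F$ to transport cardinality information from $L$ into $R$, where, by assumption, suprema of arbitrary bounded families can already be realized by countable subfamilies.

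Fix $F\colon L \to R$ strictly increasing with $R$ Dedekind super complete, and let $M \subset L$ be nonempty and bounded above by some $b \in L$. Let $\mathcal{C}$ denote the family of all countable subsets of $M$; each $C \in \mathcal{C}$ has a supremum $s_C \in L$ because $L$ is Dedekind $\sigma$-complete and $C$ is bounded above by $b$. The set $\{F(s_C) \,:\, C \in \mathcal{C}\} \subset R$ is bounded above by $F(b)$, so by Dedekind super completeness of $R$ there exists a sequence $(C_n)_{n\in\N} \subset \mathcal{C}$ with
$$\sup_{n\in\N} F(s_{C_n}) \;=\; \sup\{F(s_C) \,:\, C \in \mathcal{C}\} \;=:\; \alpha.$$
Setting $C^* := \bigcup_{n\in\N} C_n \in \mathcal{C}$ and using monotonicity of $F$ together with $C_n \subset C^*$, one obtains $\alpha \leq F(s_{C^*}) \leq \alpha$, hence $F(s_{C^*}) = \alpha$.

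The final step, and the one where strict monotonicity is genuinely needed, is to show $s_{C^*} = \sup M$. For arbitrary $x \in M$, the countable set $C^* \cup \{x\}$ lies in $\mathcal{C}$ with supremum $s_{C^*} \vee x \geq s_{C^*}$, and its $F$-image satisfies $F(s_{C^*}) \leq F(s_{C^*} \vee x) \leq \alpha = F(s_{C^*})$, hence equals $F(s_{C^*})$. By condition (ii) of Definition \ref{def:increasing}, this forces $s_{C^*} \vee x = s_{C^*}$, i.e.~$x \leq s_{C^*}$. Thus $s_{C^*}$ is the least upper bound of $M$, and $C^* \subset M$ is the desired countable subset with $\sup C^* = \sup M$. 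The case of sets bounded below is handled by the dual argument (reverse the order on both $L$ and $R$; strict monotonicity is preserved). The main obstacle is precisely the last deduction: without the strict monotonicity clause (ii) in Definition \ref{def:increasing}, equality of $F$-images would fail to imply equality in $L$, and the whole essential-supremum construction would collapse.
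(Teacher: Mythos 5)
Your proof is correct and is essentially the same argument as the paper's: both transplant the classical essential-supremum construction via the strictly increasing map $F$, take a countable union realizing the supremum of the $F$-images, and use strict monotonicity to conclude that adjoining any $x\in M$ cannot increase the candidate supremum. The only cosmetic point is that $\mathcal{C}$ should consist of \emph{nonempty} countable subsets of $M$ (so that $s_C$ is guaranteed to exist); this does not affect the argument.
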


\begin{proof}
  If $L$ is Dedekind super complete, we may choose $R=L$ and $F$ as the identity. In order to prove the converse implication, let $M\subset L$ be a nonempty subset of $L$, which is bounded above. Then, for every countable nonempty set $\Psi\subset M$, we denote by $x_\Psi:=\sup \Psi\in L$. Let
 \[
  M_0:=\big\{F\big(x_\Psi\big)\, \big|\, \Psi\subset M \text{ nonempty and countable}\big\} \quad \text{and}\quad c:=\sup M_0\in R.
 \]
 Notice that $M_0$ is nonempty, since $M$ is nonempty, and bounded above, since $F$ is nondecreasing (property (i) in Definition \ref{def:increasing}) and $M$ is bounded above. Therefore, $F(a)\in R$ is an upper bound of $M_0$ for every upper bound $a\in L$ of $M$. Since $R$ is Dedekind super complete, there exists a sequence $(\Psi^n)_{n\in \N}$ of countable nonempty subsets of $M$ with
 \[
  \sup_{n\in \N}F(x_{\Psi^n})= c.
 \]
 Then, $\Psi^*:=\bigcup_{n\in \N}\Psi^n$ is a countable nonempty subset of $M$ since it is a countable union of countable nonempty subsets of $M$, and we set $x^*:=x_{\Psi^*}$. Notice that $x_{\Psi^n}\leq x^*$ for all $n\in \N$, and therefore,
 \[
  F\big(x_{\Psi^n}\big)\leq F(x^*)\leq c\quad \text{for all }n\in \N,
 \]
 where the last inequality follows from the fact that $\Psi^*$ is a countable nonempty subset of $M$. Taking the supremum over all $n\in \N$, it follows that $F(x^*)=c$. We now show that $x^*\geq x$ for all $x\in M$. In order to see this, fix some arbitrary $x\in M$, and let $\Psi':=\Psi^*\cup\{x\}$. Then, $\Psi'$ is again a countable nonempty subset of $M$, and we obtain that
 \[
  c=F(x^*)\leq F\big(x_{\Psi'}\big)\leq c. 
 \]
 Since $x^*\leq x_{\Psi'}$ and $F$ is strictly increasing (property (ii) in Definition \ref{def:increasing}), it follows that $x^*=x_{\Psi'}$, which implies that $x\leq x^*$. We have thus shown that $x^*\in L$ is an upper bound of $M$. Now, let $a\in L$ be an upper bound of $M$. Then, $a$ is also an upper bound of $\Psi^*\subset M$, which shows that $a\geq \sup \Psi^*=x^*$. Therefore, $x^*$ is the least upper bound of $M$ and of the countable subset $\Psi^*$ of $M$. Analogously, one shows that $M$ has a countable subset with the same infimum if $M$ is bounded below.
\end{proof}

\begin{remark}\label{rem.auxres}
 Let $M$ be a nonempty subset of a Dedekind super complete lattice $L$. If $M$ is bounded above or below and directed upwards or downwards, then there exists a nondecreasing or nonincreasing sequence $(x^n)_{n\in \N}\subset M$ with $\sup M=\sup_{n\in \N} x^n$ or $\inf M=\inf_{n\in \N} x^n$, respectively. In fact, let $(y^n)_{n\in \N}\subset M$ with $\sup M= \sup_{n\in \N}y^n$. Since $M$ is directed upwards, there exists a sequence $(x^n)_{n\in \N}\subset M$ with $x^{n+1}\geq x^n\geq y^1\vee\dots\vee y^n$ for all $n\in \N$. The sequence $(x^n)_{n\in \N}$ can be constructed recursively by defining $x^1:=y^1$, and by choosing $x^{n+1}\in M$ with $x^{n+1}\geq x^n\vee y^{n+1}$ for all $n\in \N$. Since $(x^n)_{n\in \N}\subset M$, if follows that $\sup_{n\in \N}x^n\leq \sup M$. On the other hand, $y^n\leq x^n$ for all $n\in \N$, and consequently,
 \[
  \sup M=\sup_{n\in \N}y^n\leq \sup_{n\in \N}x^n\leq \sup M.
 \]
 The statement for the infimum follows in an analogous way.
\end{remark}

\section{Proofs of Lemma \ref{firstcrittight} and Lemma \ref{corui1}}\label{appendB}

\begin{proof}[Proof of Lemma \ref{firstcrittight}]
 First, assume that there exists a nondecreasing function $\psi\colon [0,\infty)\to [0,\infty)$ with $\psi(0)=0$, $\psi(s)\to \infty$ as $s\to \infty$ and
 \[
  C:=\sup_{\nu\in K}\int_0^\infty \psi(s)\, {\rm d}\nu(s)<\infty.
 \]
 Then, by Markov's inequality,
 \[
  \sup_{\nu\in K}\nu\big((s,\infty)\big)\leq \frac{1}{\psi(s)}\sup_{\nu\in K}\int_0^\infty \psi(u)\, {\rm d}\nu(u)\leq \frac{C}{\psi(s)}\to 0\quad \text{as }s\to \infty.
 \]
 Now, assume that $K$ is tight, and let $(M^n)_{n\in \N}\subset [0,\infty)$ with $1< M^n\leq M^{n+1}$ and $$\sup_{\nu\in K} \nu\big((M^n,\infty)\big)\leq 2^{-n}$$ for all $n\in \N$. Define $\psi\colon [0,\infty)\to [0,\infty)$ by
 \[
  \psi(s):=\sum_{n\in \N} 1_{(M^n,\infty)}(s)\quad \text{for all }s\geq 0.
 \]
 Then, $\psi$ is nondecreasing with $\psi(0)=0$ and $\psi(s)\to \infty$ as $s\to \infty$. Moreover,
 \[
 \sup_{\nu\in K}\int_0^\infty \psi(s)\, {\rm d}\nu(s)\leq \sum_{n\in \N} \nu\big((M^n,\infty)\big)\leq \sum_{n\in \N}2^{-n}=1.
 \]
 Choosing $\psi$, instead, as the linear interpolation of the points $(0,0)$ and $(M^n,n-1)$ for all $n\in \N$, we obtain that $\psi$ is continuous with $\psi(0)=0$ and
 \[
  \sup_{\nu\in K}\int_0^\infty \psi(s)\, {\rm d}\nu(s)\leq 1,
 \]
 since $\psi\leq \sum_{n\in \N} 1_{(M^n,\infty)}$. If \eqref{convexcrit} is satisfied for some $M\geq 0$, then $\psi$ can, e.g., be chosen as $\psi(s):=(s-M)^+$ for all $s\geq 0$. On the other hand, if $\psi$ is convex, there exist $\alpha>0$ and $\beta\geq 0$ such that $\psi(s)\geq \alpha s-\beta$ for all $s\geq 0$. Then, for $M:=\tfrac{\beta}{\alpha}$,
 \[
  \sup_{\nu\in K}\int_0^\infty (s-M)^+\, {\rm d}\nu(s) =\frac{1}{\alpha}\int_0^\infty (\alpha s-\beta)^+\, {\rm d}\nu(s)\leq \frac{1}{\alpha}\sup_{\nu\in K}\int_0^\infty \psi(s)\, {\rm d}\nu(s)<\infty.
 \]
 Next, assume that \eqref{convexcrit1} is satisfied. Up to now, we saw that the function $\psi$ can be chosen to be strictly increasing on $[M,\infty)$ for $M\geq 0$ sufficiently large. It remains to show that $\psi$ can be chosen to be strictly increasing on $[0,M]$ for all $M>0$. Let $M>0$ and
 \[
  \alpha^0:=\sup_{\nu\in K}\nu\big((M,\infty)\big)<\infty \quad \text{and}\quad \alpha^n:=\sup_{\nu\in K}\nu\big(\big(\tfrac{M}{n+1},\tfrac{M}{n}\big]\big)<\infty\quad \text{for all }n\in\N.
 \]
 Next, we choose a sequence $(c^n)_{n\in \N_0}\subset (0,\infty)$ with $c^{n+1}<c^n$ for all $n\in \N_0$, $c^n\to 0$ as $n\to \infty$ and $\sum_{n\in \N_0} c^n\alpha^n<\infty$. Then,
 \[
  \sup_{\nu\in K} \int_0^\infty c^01_{(M,\infty)}(s)+\sum_{n\in \N} c^n 1_{\big(\tfrac{M}{n+1},\tfrac{M}{n}\big]}(s)\, {\rm d}\nu(s)\leq \sum_{n\in \N_0}c^n\alpha^n<\infty.
 \]
 Let $\psi_M$ denote the linear interpolation of the points $\big(\tfrac{M}{n+1},c^n\big)$ for all $n\in \N_0$ with $\psi_0(0):=0$ and $\psi_M(s):=c_0$ for all $s> M$. Then, $\psi_M$ is strictly increasing on $[0,M]$. Choosing either $\psi(s)=\psi_M(s)+(s-M)^+$ or $\psi=\psi_{M^1}(s)+\eta(s)$ for $s\geq 0$, where $\eta$ is the linear interpolation of the points $(M^n,n-1)$, we see that $\psi$ is strictly increasing. On the other hand, if $\psi$ is strictly increasing, then $\psi(s)>0$ for all $s>0$, and therefore, by Markov's inequality,
 \[
  \sup_{\nu\in K} \nu\big((s,\infty)\big)\leq \frac{1}{\psi(s)}\sup_{\nu\in K}\int_0^\infty \psi(u)\, {\rm d}\nu (u) <\infty\quad \text{for all }s>0.
 \]
\end{proof}

\begin{proof}[Proof of Lemma \ref{corui1}]
 Note that $[0,\infty)\to [0,\infty),\;s\mapsto s$ is uniformly integrable for $K$ if and only if the set $L=\{\nu_1\, |\, \nu\in K\}$ is tight, where, for $\nu\in K$, the measure $\nu_1\colon \BB(\R_+)\to [0,\infty]$ is given by
 \[
  \nu_1(B):=\int_B u\, {\rm d}\nu(u)\quad \text{for all }B\in \BB(\R_+).
 \]
 By the previous lemma, the set $L$ is tight if and only if there exists a nondecreasing function $\eta\colon [0,\infty)\to [0,\infty)$ with $\eta(0)=0$, $\eta(s)\to \infty$ as $s\to \infty$ and
 \[
  \sup_{\sigma\in L}\int_0^\infty \eta(s)\, {\rm d}\sigma(s)<\infty.
 \]
 Defining $\psi(s):=\eta(s)s$ or $\eta(s):=\tfrac{\psi(s)}{s}$ for all $s\geq 0$, respectively, the equivalence of (i) and (ii) follows. By Lemma \ref{firstcrittight}, we may assume that $\eta$ is continuous. Let $\alpha\in (0,1]$ and $\psi_\alpha(s):=\int_0^s\eta(u)^\alpha\, du$ for all $s\geq 0$. Since $\eta$ is continuous and nondecreasing, $\psi_\alpha$ is continuously differentiable, convex and nondecreasing with $\psi_\alpha(0)=0$. Since $\psi_\alpha(s)\leq \eta(s)s$ for all $s\geq 0$, it follows that
 \[
  \sup_{\nu\in K}\int_0^\infty \psi_\alpha(s)\, {\rm d}\nu(s)<\infty.
 \]
 By the transformation theorem, $\tfrac{\psi_\alpha(s)}{s}=\int_0^1\eta(su)^\alpha\, {\rm d}u$ for all $s\geq 0$, which shows that the map $[0,\infty)\to [0,\infty),\; s\mapsto \tfrac{\psi_\alpha(s)}{s}$ is nondecreasing with $\frac{\psi_\alpha(s)}{s}\to \infty$ as $s\to \infty$. Moreover, by H\"older's inequality or Jensen's inequality,
 \[
  \frac{\psi_\alpha(s)}{\psi(s)}=\eta(s)^{-1}\frac{\psi_\alpha(s)}{s}=\eta(s)^{-1}\int_0^1\eta(su)^\alpha\,{\rm d}u\leq \eta(s)^{\alpha-1}.
 \]
 Hence,
 \[
  \sup_{\nu\in K}\int_s^\infty \psi_\alpha(u)\, {\rm d}\nu(u)\leq \eta(s)^{\alpha-1}\sup_{\nu\in K}\int_0^\infty\psi(u)\, {\rm d}\nu(u),
 \]
 which shows that $\psi_\alpha$ is u.i. for $K$ if $\alpha\in (0,1)$. By Lemma \ref{firstcrittight}, the function $\eta$ can, additionally, be chosen to be strictly increasing if and only if \eqref{convexcrit1} is satisfied. In this case, $\psi_\alpha$ can be chosen to be strictly convex and thus strictly increasing. On the other hand, if the function $\psi$ in (ii) is strictly convex, it is strictly increasing, and therefore \eqref{convexcrit1} has to be satisfied by Lemma \ref{firstcrittight}.
\end{proof}

\section*{Acknowledgements}
Financial support through the German Research Foundation via CRC 1283 is gratefully acknowledged. The author thanks Jodi Dianetti and Giorgio Ferrari for many fruitful discussions and helpful comments related to this work.



\bibliographystyle{abbrv}

\begin{thebibliography}{10}

\bibitem{MR0071400}
R.~M. Baer.
\newblock A characterization theorem for lattices with {H}ausdorff interval
  topology.
\newblock {\em J. Math. Soc. Japan}, 7:177--181, 1955.

\bibitem{MR1700749}
P.~Billingsley.
\newblock {\em Convergence of probability measures}.
\newblock Wiley Series in Probability and Statistics: Probability and
  Statistics. John Wiley \& Sons, Inc., New York, second edition, 1999.
\newblock A Wiley-Interscience Publication.

\bibitem{MR0227053}
G.~Birkhoff.
\newblock {\em Lattice theory}.
\newblock Third edition. American Mathematical Society Colloquium Publications,
  Vol. XXV. American Mathematical Society, Providence, R.I., 1967.

\bibitem{MR2267655}
V.~I. Bogachev.
\newblock {\em Measure theory. {V}ol. {I}, {II}}.
\newblock Springer-Verlag, Berlin, 2007.

\bibitem{MR3412766}
T.~K. Chandra.
\newblock de {L}a {V}all\'{e}e {P}oussin's theorem, uniform integrability,
  tightness and moments.
\newblock {\em Statist. Probab. Lett.}, 107:136--141, 2015.

\bibitem{dffn19}
J.~Dianetti, G.~Ferrari, M.~Fischer, and M.~Nendel.
\newblock Submodular mean field games: Existence and approximation of
  solutions.
\newblock {\em Preprint}, 2019.

\bibitem{MR2169807}
H.~F\"{o}llmer and A.~Schied.
\newblock {\em Stochastic finance}, volume~27 of {\em De Gruyter Studies in
  Mathematics}.
\newblock Walter de Gruyter \& Co., Berlin, extended edition, 2004.
\newblock An introduction in discrete time.

\bibitem{MR6496}
O.~Frink, Jr.
\newblock Topology in lattices.
\newblock {\em Trans. Amer. Math. Soc.}, 51:569--582, 1942.

\bibitem{MR2740082}
T.-C. Hu and A.~Rosalsky.
\newblock A note on the de {L}a {V}all\'{e}e {P}oussin criterion for uniform
  integrability.
\newblock {\em Statist. Probab. Lett.}, 81(1):169--174, 2011.

\bibitem{MR1833858}
R.~P. Kertz and U.~R\"{o}sler.
\newblock Complete lattices of probability measures with applications to
  martingale theory.
\newblock In {\em Game theory, optimal stopping, probability and statistics},
  volume~35 of {\em IMS Lecture Notes Monogr. Ser.}, pages 153--177. Inst.
  Math. Statist., Beachwood, OH, 2000.

\bibitem{MR2998767}
L.~Leskel\"{a} and M.~Vihola.
\newblock Stochastic order characterization of uniform integrability and
  tightness.
\newblock {\em Statist. Probab. Lett.}, 83(1):382--389, 2013.

\bibitem{MR3525602}
H.~Levy.
\newblock {\em Stochastic dominance}.
\newblock Springer, Cham, third edition, 2016.
\newblock Investment decision making under uncertainty.

\bibitem{MR2509253}
S.~Meyn and R.~L. Tweedie.
\newblock {\em Markov chains and stochastic stability}.
\newblock Cambridge University Press, Cambridge, second edition, 2009.
\newblock With a prologue by Peter W. Glynn.

\bibitem{MR2265633}
M.~Shaked and J.~G. Shanthikumar.
\newblock {\em Stochastic orders}.
\newblock Springer Series in Statistics. Springer, New York, 2007.

\bibitem{MR177430}
V.~Strassen.
\newblock The existence of probability measures with given marginals.
\newblock {\em Ann. Math. Statist.}, 36:423--439, 1965.

\bibitem{MR2459454}
C.~Villani.
\newblock {\em Optimal transport}, volume 338 of {\em Grundlehren der
  Mathematischen Wissenschaften [Fundamental Principles of Mathematical
  Sciences]}.
\newblock Springer-Verlag, Berlin, 2009.
\newblock Old and new.

\end{thebibliography}

\end{document}